   \def\MR#1{}
\DeclareMathOperator*{\diam}{diam}
\DeclareMathOperator*{\spec}{Spec}
\DeclareMathOperator*{\poly}{poly}
\DeclareMathOperator*{\ir}{ir}
\begin{document}
\pagestyle{myheadings}

\title{
Laplacian Distribution and Domination
}

\author{Domingos M. Cardoso}
\address{Departamento de Matem\'atica, Univ. de Aveiro, 3810-193 Aveiro, Portugal}
\email{\tt dcardoso@ua.pt}
\author{David P. Jacobs}
\address{ School of Computing, Clemson University Clemson, SC 29634 USA}
\email{\tt dpj@clemson.edu}
\author{Vilmar Trevisan}
\address{Instituto de Matem\'atica, UFRGS,  91509--900 Porto Alegre, RS, Brazil}
\email{\tt trevisan@mat.ufrgs.br}
\pdfpagewidth 8.5 in
\pdfpageheight 11 in

\def\floor#1{\left\lfloor{#1}\right\rfloor}

\newcommand{\fractional}[1]{\gamma_f (#1)}
\newcommand{\dom}[1]{\gamma(#1)}
\newcommand{\theratio}[2]{ \lceil \frac{#1}{#2} \rceil }
\newcommand{\avgdeg}[1]{2 - \frac{2}{#1} }
\newcommand{\floorratio}[2]{ \lfloor \frac{#1}{#2} \rfloor }
\newcommand{\Prf}{{\bf Proof: }}
\newcommand{\PrfSketch}{{\bf Proof (Sketch): }}
\newcommand{\boldQ}{\mbox{\bf Q}}
\newcommand{\boldR}{\mbox{\bf R}}
\newcommand{\boldZ}{\mbox{\bf Z}}
\newcommand{\boldc}{\mbox{\bf c}}
\newcommand{\sign}{\mbox{sign}}
\newtheorem{Thr}{Theorem}
\newtheorem{Pro}{Proposition}
\newtheorem{Con}{Conjecture}
\newtheorem{Cor}{Corollary}
\newtheorem{Lem}{Lemma}
\newtheorem{Obs}{Observation}
\newtheorem{Que}{Question}
\newtheorem{Fac}{Fact}
\newtheorem{Ex}{Example}
\newtheorem{Def}{Definition}
\newtheorem{Prop}{Proposition}
\def\floor#1{\left\lfloor{#1}\right\rfloor}

\newenvironment{my_enumerate}{
\begin{enumerate}
  \setlength{\baselineskip}{14pt}
  \setlength{\parskip}{0pt}
  \setlength{\parsep}{0pt}}{\end{enumerate}
}
\newenvironment{my_description}{
\begin{description}
  \setlength{\baselineskip}{14pt}
  \setlength{\parskip}{0pt}
  \setlength{\parsep}{0pt}}{\end{description}
}

\begin{abstract}
Let $m_G(I)$ denote the number of Laplacian eigenvalues of a graph $G$
in an interval $I$, and let $\gamma(G)$ denote its domination number.
We extend the recent result $m_G[0,1) \leq \gamma(G)$,
and show that isolate-free graphs also satisfy $\gamma(G) \leq m_G[2,n]$.
In pursuit of better understanding Laplacian eigenvalue distribution,
we find applications for these inequalities.
We relate these spectral parameters with the approximability of $\gamma(G)$,
showing that $\frac{\gamma(G)}{m_G[0,1)} \not\in O(\log n)$.
However, $\gamma(G) \leq  m_G[2, n] \leq   (c + 1) \dom{G}$
for $c$-cyclic graphs, $c \geq 1$.
For trees $T$, $\gamma(T) \leq  m_T[2, n] <  2 \gamma(G)$.

\vspace{.1in}
\noindent
\textbf{\keywordsname:}
graph, Laplacian eigenvalue, domination number.

\noindent
{\bf AMS subject classification:} 05C50, 05C69.
\end{abstract}

\thanks{Domingos M. Cardoso was partially supported by the
Portuguese Foundation for Science and Technology
(FCT--Funda\c c\~ao para a Ci\^encia e a Tecnologia),
through the CIDMA -- Center for Research and Development in Mathematics and Applications,
within project UID/MAT/04106/2013.
}

\thanks{David P. Jacobs and Vilmar Trevisan were supported by CNPq Grant 400122/2014-6, Brazil}

\maketitle

\section{Introduction}
Let $G = (V,E)$ be an undirected graph with vertex set
$V = \{v_1, \dots, v_n\}$.
For $v \in V$, its {\em open neighborhood} $N(v)$ denotes the set of vertices adjacent to $v$.
The {\em adjacency matrix} of $G$ is the $n \times n$
matrix $A = [a_{ij}]$ for which
$a_{ij} = 1$ if $v_i$ and $v_j$ are adjacent,
and $a_{ij} = 0$ otherwise.

The {\em Laplacian matrix} of $G$ is defined as $L_G = D - A$, where $D = [d_{ij}]$
is the diagonal matrix in which $d_{ii} = \deg(v_i)$, the degree of $v_i$.
The {\em Laplacian spectrum} of $G$ is the multi-set of eigenvalues of $L_G$,
we number
$$
 \mu_1 \geq \mu_2 \geq \ldots \geq \mu_n = 0.
$$
It is known that $\mu_1 \leq n$.
Unless indicated otherwise, all eigenvalues in this paper are Laplacian.
We refer to \cite{mohar,mohar92} for more background on the Laplacian
spectra of graphs.

A set $S \subseteq V$ is {\em dominating} if every $v \in V-S$ is
adjacent to some member in $S$.
The {\em domination number} $\dom{G}$
is the minimum size of a dominating set.
Its  decision problem is well-known to be NP-complete,
and it is even hard to approximate.

Since 1996, several papers have been written
relating the Laplacian spectrum of a graph $G$ with $\dom{G}$.
Often these results obtain a bound, involving $\dom{G}$,
for a {\em specific} eigenvalue such as $\mu_1$ or $\mu_{n-1}$.
For example,
it was shown that $\mu_1 < n - \theratio{\dom{G}  - 2}{2}$
by Brand and Seifter
\cite{Brand96}
for $G$ connected and $\dom{G} \geq 3$.
This was recently improved in \cite{XingZhou2015}.
We refer to the introduction of \cite{HJT2015}
for a summary of these results.

Other spectral graph theory papers,
including this one,
are interested in {\em distribution}, that is,
the number of Laplacian eigenvalues in an interval.
For a real interval $I$, $m_G (I)$ denotes the number
of Laplacian eigenvalues of $G$ in $I$.
There exist several papers in the literature that relate
Laplacian distribution to specific graph parameters, including $\dom{G}$.
For example, the paper by
Zhou, Zhou and Du
\cite{MR3313404}
shows that for trees $T$, $m_T [0,2) \leq  n - \dom{T}$.

The following spectral lower bound for $\dom{G}$
was proved in \cite{HJT2015}:
\begin{Thr}
\label{lemgamma}
If $G$ is a graph, then $m_G[0,1) \leq \dom{G}$.
\end{Thr}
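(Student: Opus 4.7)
The plan is to reformulate the claim as ``$L_G - I$ has at most $\gamma(G)$ negative eigenvalues,'' and then exhibit a large principal submatrix of $L_G$ whose eigenvalues are all at least $1$. Once that submatrix is in hand, Cauchy's interlacing theorem will finish the argument.

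Concretely, I would first fix a minimum dominating set $D \subseteq V$ with $|D| = \gamma(G)$, and let $M$ denote the principal submatrix of $L_G$ obtained by deleting the rows and columns indexed by $D$. The key structural observation is that $M$ is not itself the Laplacian of $G[V-D]$: its diagonal still records degrees in $G$, not in $G[V-D]$. Writing out entries, one sees
\begin{equation*}
M = L_{G[V-D]} + \Delta,
\end{equation*}
where $\Delta$ is the diagonal matrix whose $(v,v)$ entry equals the number of neighbors of $v$ in $D$. Because $D$ is a dominating set, every vertex $v \in V-D$ has at least one neighbor in $D$, so every diagonal entry of $\Delta$ is at least $1$; equivalently, $\Delta \succeq I$.

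The second step is then immediate: $M - I = L_{G[V-D]} + (\Delta - I)$ is a sum of two positive semidefinite matrices, hence positive semidefinite, so every eigenvalue of $M$ is at least $1$. Applying Cauchy interlacing to the principal submatrix $M$ of order $n - \gamma(G)$ of $L_G$ gives $\mu_i(L_G) \geq \mu_i(M) \geq 1$ for $i = 1, \ldots, n - \gamma(G)$, meaning at most $\gamma(G)$ Laplacian eigenvalues of $G$ can lie in $[0,1)$.

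The only place where something substantive has to be noticed is the decomposition $M = L_{G[V-D]} + \Delta$ together with the fact that the domination hypothesis is exactly what forces $\Delta \succeq I$; after that, both positive semidefiniteness and interlacing are standard tools. I would expect the write-up to be short, with the main risk being a bookkeeping error on which matrix (the deletion of $D$ versus the deletion of $V-D$) carries the relevant spectral information; selecting the former and recognizing that its diagonal correction is exactly the domination witness is the crux.
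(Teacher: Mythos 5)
Your proof is correct, but it takes a genuinely different route from the paper's. The paper proves the bound by covering $V$ with a star forest $F = (S_{n_1},\ldots,S_{n_{\gamma(G)}})$ whose centers form a minimum dominating set (this uses the Bollob\'as--Cockayne private-neighbor lemma), observing that each star contributes exactly one eigenvalue in $[0,1)$ so that $m_F[0,1)=\gamma(G)$, and then invoking edge-monotonicity of the spectrum ($m_H[0,a)\geq m_G[0,a)$ for spanning subgraphs $H$, via Courant--Weyl) to conclude $m_G[0,1)\leq m_F[0,1)$. You instead delete the dominating set $D$ from $L_G$, write the resulting principal submatrix as $M=L_{G[V-D]}+\Delta$ with $\Delta\succeq I$ precisely because $D$ dominates, and apply Cauchy interlacing; the decomposition and the indexing in the interlacing step both check out. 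Your argument is shorter and more self-contained: it needs no private-neighbor or star-forest machinery, it handles isolated vertices without the separate remark the paper requires (isolates are forced into $D$, so every $v\in V-D$ still has a neighbor in $D$), and it generalizes immediately to $k$-domination, giving $m_G[0,k)\leq |D|$ whenever every vertex outside $D$ has at least $k$ neighbors in $D$. What the paper's construction buys in exchange is reusability: the same star forest, read off at the top of each star's spectrum instead of the bottom, yields the companion upper bound $\gamma(G)\leq m_G[2,n]$ and drives the later edge-addition arguments for trees and $c$-cyclic graphs, which your vertex-deletion argument does not directly provide.
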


In this paper we observe that for $G$ isolate-free one has
$$
\dom{G} \leq m_G [2,n].
$$
Since $m_G[0,2) + m_G[2,n] = n$,
this inequality generalizes the result in \cite{MR3313404} for trees.

Our paper seeks {\em applications} to the inequalities $m_G[0,1) \leq  \dom{G}$ and $\dom{G} \leq m_G [2,n]$.
We also seek insight into the {\em ratios} of these numbers.
In the examples given in \cite{HJT2015},
the numbers $\dom{G}$ and $m_G[0,1)$
were equal or differed by one.
We will see that this does not happen in general.

The remainder of our paper is organized as follows.
We finish this introduction by considering the sharpness of these inequalities.
In the next section we recall the proof of
Theorem~\ref{lemgamma} and modify it to obtain an inequality involving $m_G[2,n]$.
In Section~\ref{sec:apps} we obtain several
new results based on existing
Nordhaus-Gaddum inequalities
and Gallai-type theorems.
One interesting new Nordhaus-Gaddum result is that
for any graph $G$, $m_G[0,1) + m_{\bar{G}}[0,1) \leq  n + 1$
with equality if and only if $G = K_n$ or $G = \bar{K_n}$.
Another interesting result is that a graph must have fewer than $\sqrt{n}$
Laplacian eigenvalues in at least one of the intervals $[0,1)$ or $(n-1,n]$.
In Section~\ref{sec:approx},
using results from the approximation literature,
we explain why we can't expect the quantities
$m_G[0,1)$ or $m_G [2,n]$ to be close to $\dom{G}$.
Using some results on Vizing's conjecture,
we show that $\frac{\dom{G} }{ m_G[0,1) } \notin O(\log n)$.
For  trees, $\dom{T} \leq m_T[2,n] < 2 \dom{T}$.
For $c$-cyclic graphs $G$, $c \geq 1$, $m_G[2,n]  \leq (c+ 1) \dom{G}$.
These results seem interesting
in light of the domination number's
general inapproximability.
In Section~\ref{sec:concluding} we observe that many results
also hold for the signless Laplacian spectrum.

\vspace{.1in}
\paragraph{\bf{Tightness}}
We briefly discuss whether $\dom{G}$ is the natural
graph parameter bounded below by $m_G[0,1)$ and above by $m_G[2,n]$.
For example, one might ask
if there exists a graph parameter $p(G)$ for which
$$
m_{G}[0,1) \leq p(G) \leq \dom{G} .
$$
We considered three well-known graph parameters, each bounded above by $\dom{G}$,
and observed that they are not always bounded below by $m_{G}[0,1)$.
More precisely, while the {\em 2-packing number} $\rho(G)$ (see \cite{MR2014537})
is always at most $\dom{G}$, we can find a graph for which $\rho(G) < m_{G}[0,1)$.
Similar examples can be found
for the {\em fractional domination number} $\fractional{G}$ \cite{Grinstead1989},
and the {\em irredundance number} $\ir(G)$ \cite{Damaschke1991}.
We omit the details.

One can also ask if there exists a graph parameter $q(G)$
for which
$$
\dom{G} \leq q(G) \leq m_G [2,n]
$$
for isolate-free $G$.
Graph parameters $q(G)$ for which $\dom{G} \leq q(G)$
include the {\em independent domination number} $i(G)$,
the {\em edge covering number} $\alpha_1 (G)$,
and
the matching number
$\beta_1 (G)$.
In the first two cases we can provide counter examples to show
they are not necessarily bounded above by $ m_G [2,n]$.
Interestingly, we will see that
$\dom{G} \leq \beta_1 (G) \leq m_G [2,n]$,
when $G$ is isolate-free.

\section{Upper bound for $\dom{G}$}
\label{sec:upperbound}
In this section we show how to modify
the proof of Theorem~\ref{lemgamma}
to obtain a new inequality.
For convenience, we recall the facts used to prove
Theorem~\ref{lemgamma}.
Proofs or references can be found in \cite{HJT2015}.
In this paper, a {\em star} $S_n$ is the complete bipartite graph $K_{1, n-1}$, and $n \geq 2$.

\begin{Lem}
\label{lemstar}
The star $S_n$ on $n$ vertices has Laplacian
spectrum $0, 1^{n-2}, n$.
\end{Lem}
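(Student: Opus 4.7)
The plan is to exhibit a complete eigenbasis of $L_{S_n}$ explicitly. Order the vertices so that the center of $S_n$ comes first and the $n-1$ leaves follow; then $L_{S_n}$ has diagonal entries $(n-1, 1, 1, \ldots, 1)$, with $-1$ entries in the first row and first column at the leaf positions and zeros elsewhere.

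I would produce three families of eigenvectors. First, the all-ones vector $\mathbf{1}$ lies in the kernel of every graph Laplacian, yielding the eigenvalue $0$. Second, for each pair of leaves $v_i, v_j$ with $i, j \geq 2$, the vector $e_i - e_j$ (supported only on the leaves) satisfies $L_{S_n}(e_i - e_j) = e_i - e_j$ by a direct computation, since each leaf row of $L_{S_n}$ acts as the identity on a leaf-supported vector. Taking the $n-2$ consecutive differences $e_2 - e_3, \, e_3 - e_4, \, \ldots, \, e_{n-1} - e_n$ gives a linearly independent family, so the eigenvalue $1$ appears with multiplicity at least $n-2$.

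For the remaining eigenvalue I would use the trace identity $\operatorname{tr}(L_{S_n}) = 2|E(S_n)| = 2(n-1)$; subtracting the contributions $0 + (n-2)\cdot 1 = n-2$ leaves exactly $n$. Equivalently, one writes down the explicit eigenvector $v = (-(n-1), 1, 1, \ldots, 1)^{T}$, which is orthogonal to $\mathbf{1}$ and to each $e_i - e_j$, and verifies $L_{S_n} v = n v$ by a one-line check. I do not anticipate any real obstacle: the argument is elementary, and the only step requiring a moment's thought is the linear independence of the leaf-difference eigenvectors, which is immediate.
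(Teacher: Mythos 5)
Your proof is correct and complete: the all-ones vector, the $n-2$ independent leaf-difference vectors $e_i-e_{i+1}$, and the vector $(-(n-1),1,\dots,1)^{T}$ do form a full eigenbasis with eigenvalues $0$, $1$ (multiplicity $n-2$), and $n$, and either the trace identity or the one-line check $L_{S_n}v=nv$ legitimately pins down the last eigenvalue. The paper itself gives no proof of this lemma (it defers to a cited reference), so there is nothing to compare against; your explicit eigenvector construction is the standard elementary argument and fully suffices. Note only that in the degenerate case $n=2$ the family of leaf differences is empty, which your argument handles correctly since the claimed multiplicity of the eigenvalue $1$ is then zero.
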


\begin{Lem}
\label{lemlaplaciansum}
For graphs $G_1 = (V, E_1)$ and $G_2 = (V,E_2)$ where $E_1 \cap E_2 = \emptyset$,
and $G = (V, E_1 \cup E_2)$, we have $L_G = L_{G_1} + L_{G_2}$.
\end{Lem}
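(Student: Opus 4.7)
The plan is to verify the identity by a direct entrywise calculation, splitting the Laplacian into its degree and adjacency parts. Write $L_{G_i} = D_{G_i} - A_{G_i}$ for $i=1,2$ and $L_G = D_G - A_G$, so it suffices to prove that $A_G = A_{G_1} + A_{G_2}$ and $D_G = D_{G_1} + D_{G_2}$ as $n \times n$ matrices.

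For the adjacency part, I would argue as follows. The hypothesis $E_1 \cap E_2 = \emptyset$ means that for every ordered pair of distinct vertices $(v_i,v_j)$, the edge $\{v_i,v_j\}$ belongs to at most one of $E_1$ or $E_2$. Hence the $(i,j)$ entry of $A_{G_1}$ and the $(i,j)$ entry of $A_{G_2}$ are never simultaneously equal to $1$, and their sum is exactly $1$ precisely when $\{v_i,v_j\} \in E_1 \cup E_2$, i.e.\ when $v_i,v_j$ are adjacent in $G$. This gives $A_G = A_{G_1} + A_{G_2}$ entrywise.

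For the degree part, the key observation is that disjointness of $E_1$ and $E_2$ implies that the edges of $G$ incident with a vertex $v$ partition into those lying in $E_1$ and those lying in $E_2$. Consequently $\deg_G(v) = \deg_{G_1}(v) + \deg_{G_2}(v)$ for each $v \in V$, which is exactly the identity $D_G = D_{G_1} + D_{G_2}$ for the diagonal degree matrices. Combining the two identities yields
\[
L_G \;=\; D_G - A_G \;=\; (D_{G_1} + D_{G_2}) - (A_{G_1} + A_{G_2}) \;=\; L_{G_1} + L_{G_2}.
\]

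There is really no serious obstacle here; the statement is essentially the additivity of the degree function and the indicator of adjacency under disjoint edge unions. The only point to be careful about is the use of the disjointness hypothesis, which is needed to ensure that no $(i,j)$ entry of the adjacency matrices overlaps and that the degree counts do not overcount any incident edge. Once that is spelled out, the proof collapses to the one-line computation above.
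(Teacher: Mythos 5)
Your proof is correct and is the standard entrywise verification; the paper itself does not prove this lemma but defers to a reference, and your argument (additivity of the adjacency indicator and of vertex degrees under a disjoint edge union) is exactly the canonical one.
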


Let $\lambda_i (A)$ denote the $i$-th largest eigenvalue of a Hermitian matrix $A$.
\begin{Lem}
\label{lemcourantweyl}
If $A$ and $B$ are Hermitian matrices of order $n$, and $B$ is positive semi-definite,
then $\lambda_i (A + B) \geq \lambda_i (A)$,
for $1 \leq i \leq n$.
\end{Lem}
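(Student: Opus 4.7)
The plan is to prove this via the Courant--Fischer min--max characterization of eigenvalues of a Hermitian matrix. Recall that for any Hermitian $n \times n$ matrix $M$,
$$
\lambda_i(M) \;=\; \max_{\substack{S \subseteq \mathbb{C}^n \\ \dim S = i}} \;\min_{\substack{x \in S \\ \|x\|=1}} x^* M x,
$$
so eigenvalues can be compared by comparing Rayleigh quotients over common subspaces.

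First, I would use positive semi-definiteness of $B$ to establish the pointwise comparison $x^*(A+B)x = x^* A x + x^* B x \geq x^* A x$ valid for every vector $x$, since $x^* B x \geq 0$. This is the only place $B \succeq 0$ enters.

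Next, fix $i$ and let $S^\star$ be any $i$-dimensional subspace attaining the maximum in the Courant--Fischer formula for $A$, so that
$$
\lambda_i(A) \;=\; \min_{x \in S^\star,\ \|x\|=1} x^* A x.
$$
For each unit $x \in S^\star$ we have $x^*(A+B)x \geq x^* A x$, hence
$$
\min_{x \in S^\star,\ \|x\|=1} x^*(A+B)x \;\geq\; \min_{x \in S^\star,\ \|x\|=1} x^* A x \;=\; \lambda_i(A).
$$
Finally, applying Courant--Fischer to $A+B$ gives $\lambda_i(A+B) \geq \min_{x \in S^\star,\ \|x\|=1} x^*(A+B)x$, since $S^\star$ is just one admissible $i$-dimensional subspace in the outer maximum. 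Chaining these two inequalities yields $\lambda_i(A+B) \geq \lambda_i(A)$, as desired.

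There is no real obstacle here: the lemma is a standard special case of Weyl's monotonicity theorem, and the only subtlety is being careful with the direction of the $\max$ and $\min$ when transferring the pointwise inequality from individual Rayleigh quotients to eigenvalues. One could alternatively argue by a dimension-counting/intersection argument (the Courant--Fischer proof itself) rather than invoking the min--max formula as a black box, but the route above is the cleanest.
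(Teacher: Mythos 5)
Your proof is correct: the pointwise inequality $x^*(A+B)x \geq x^*Ax$ from $B \succeq 0$, combined with the Courant--Fischer max--min characterization applied to an optimal subspace for $A$, is exactly the standard argument for Weyl's monotonicity theorem, and every step (attainment of the maximum, the direction of the inequalities when passing through $\min$ and $\max$) is handled properly. The paper does not prove this lemma at all --- it is stated as a recalled fact with proofs or references deferred to \cite{HJT2015} --- so there is no competing argument to compare against; your write-up supplies the standard proof the paper omits.
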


\begin{Lem}
\label{lemsubgraph}
Let $G = (V,E)$ and $H = (V,F)$ be graphs with $F \subseteq E$.  Then
\begin{enumerate}
\item for all $i$, $\mu_i (H) \leq \mu_i (G)$;
\item for any $a$, $m_H[0,a) \geq m_G[0,a)$;
\item for any $a$, $m_H [a,n] \leq m_G [a,n]$.
\end{enumerate}
\end{Lem}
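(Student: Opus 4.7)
The plan is to deduce (1) from Lemmas~\ref{lemlaplaciansum} and \ref{lemcourantweyl}, and then derive (2) and (3) as bookkeeping consequences of (1) using the ordering $\mu_1 \geq \mu_2 \geq \cdots \geq \mu_n$.

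First I would introduce the ``difference graph'' $H' = (V, E \setminus F)$, which is edge-disjoint from $H$ and satisfies $E = F \cup (E \setminus F)$. By Lemma~\ref{lemlaplaciansum}, $L_G = L_H + L_{H'}$. The Laplacian $L_{H'}$ is positive semi-definite, since for any $x \in \mathbb{R}^n$ one has $x^{\top} L_{H'} x = \sum_{uv \in E \setminus F}(x_u - x_v)^2 \geq 0$. Applying Lemma~\ref{lemcourantweyl} with $A = L_H$ and $B = L_{H'}$ gives $\mu_i(G) = \lambda_i(L_H + L_{H'}) \geq \lambda_i(L_H) = \mu_i(H)$ for every $i$, establishing (1).

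For (2), fix $a$ and let $k = m_G[0,a)$. If $k = 0$ the inequality is trivial, so assume $k \geq 1$. The ordering of the spectrum of $G$ then forces $\mu_{n-k+1}(G) < a$. By (1), $\mu_{n-k+1}(H) \leq \mu_{n-k+1}(G) < a$, so $H$ has at least $k$ eigenvalues strictly less than $a$, i.e.\ $m_H[0,a) \geq k = m_G[0,a)$. Statement (3) is then immediate from (2) together with the identity $m_G[0,a) + m_G[a,n] = n$ (and likewise for $H$), which converts the inequality on $[0,a)$ into the reverse inequality on $[a,n]$.

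There is no real obstacle here: the heart of the argument is the positive-semidefiniteness of $L_{H'}$, which feeds the Courant--Weyl inequality and yields the monotonicity $\mu_i(H) \leq \mu_i(G)$ under edge addition. The only point needing care is the translation between ``number of eigenvalues below $a$'' and a statement about the individual $\mu_{n-k+1}$, but this is purely a matter of unwinding the ordering convention.
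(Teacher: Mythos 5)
Your proof is correct and follows exactly the route the paper intends: it states Lemmas~\ref{lemlaplaciansum} and \ref{lemcourantweyl} immediately before Lemma~\ref{lemsubgraph} precisely so that part (1) follows from the decomposition $L_G = L_H + L_{H'}$ with $L_{H'}$ positive semi-definite, deferring the details to \cite{HJT2015}. Your bookkeeping for parts (2) and (3), using the ordering convention and the identity $m_G[0,a) + m_G[a,n] = n$, is also sound.
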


Let $S$ be a set of vertices, and $u \in S$.
A vertex $v \in V - S$ is an {\em external private neighbor} of
$u$ (with respect to $S$) if $N(v) \cap S = \{u\}$.
That is, $v \in V - S$ is a neighbor of $u$, but not
a neighbor of any other member of $S$.

\begin{Lem}
[\cite{MR542545}]
\label{lembollabascockayne}
Any graph without isolated vertices has a minimum dominating set
in which every member has an external private neighbor.
\end{Lem}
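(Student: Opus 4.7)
The plan is to prove this by an extremal argument over minimum dominating sets. Among all minimum dominating sets of $G$, I would select one, call it $D$, that maximizes the number of edges in the induced subgraph $G[D]$. The claim to verify is that every such extremal $D$ has the desired property: each $u \in D$ admits an external private neighbor.

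As a preliminary observation, I would note the following structural consequence of minimality. For each $u \in D$, the set $D \setminus \{u\}$ fails to dominate $G$, so there exists a vertex of $(V \setminus D) \cup \{u\}$ with no neighbor in $D \setminus \{u\}$. A quick case split on whether that vertex is $u$ itself or lies in $V \setminus D$ shows that for every $u \in D$, either $u$ is isolated in $G[D]$, or $u$ has an external private neighbor.

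Now suppose for contradiction that some $u \in D$ has no external private neighbor. By the observation, $u$ must be isolated in $G[D]$. Since $G$ has no isolated vertices, $u$ has some neighbor $w$, and necessarily $w \in V \setminus D$. The assumption that $u$ has no external private neighbor forces $w$ to have another neighbor in $D$, so $w$ has at least one neighbor in $D \setminus \{u\}$.

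The concluding step is to analyze the swap $D' = (D \setminus \{u\}) \cup \{w\}$. I would check (i) that $D'$ is a dominating set, since $u$ is now dominated by $w$ and every other vertex of $V \setminus D$ previously dominated by $u$ has a second neighbor in $D \setminus \{u\} \subseteq D'$; and (ii) that $|E(G[D'])| > |E(G[D])|$, because $u$ contributed no edges to $G[D]$ while $w$ contributes at least one edge to $G[D']$ via its guaranteed neighbor in $D \setminus \{u\}$. Since $|D'| = |D|$, the set $D'$ is another minimum dominating set that strictly exceeds $D$ in the extremal quantity, contradicting the choice of $D$. The main delicate point is the bookkeeping in step (i) to ensure $D'$ still dominates every vertex; otherwise the argument is straightforward.
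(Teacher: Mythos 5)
Your proof is correct. The paper itself gives no proof of this lemma --- it is quoted with a citation to Bollob\'as and Cockayne --- and your extremal/exchange argument (maximize $|E(G[D])|$ over minimum dominating sets, note that by minimality each $u\in D$ is either isolated in $G[D]$ or has an external private neighbor, and swap an isolated $u$ with a neighbor $w$ outside $D$ to gain an edge) is essentially the classical proof from that source; all the key checks, in particular that $D'$ still dominates because no vertex of $V-D$ is privately dominated by $u$, are handled correctly.
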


We will say that $G$ has a {\em star forest} $F = ( S_{n_1}, \ldots , S_{n_k} )$,
if there exists a sequence of pairwise vertex-disjoint subgraphs $H_i$ of $G$,
with $H_i \simeq S_{n_i}$, for all $i$, $1 \leq i \leq k$.
We emphasize that stars have order $n \geq 2$.

\begin{Lem}
\label{lemspanningforest}
Any isolate-free graph $G = (V,E)$ with domination number $\gamma$ has
a star forest $F = ( S_{n_1}, \ldots , S_{n_\gamma})$ such that every $v \in V$ belongs
to exactly one star, and the centers of the stars form a minimum dominating set.
\end{Lem}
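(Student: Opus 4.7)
The plan is to build the star forest by starting from a carefully chosen minimum dominating set and then assigning the remaining vertices as leaves. By Lemma~\ref{lembollabascockayne}, since $G$ is isolate-free, we may pick a minimum dominating set $D = \{u_1,\dots,u_\gamma\}$ in which each $u_i$ admits an external private neighbor $v_i \in V-D$, i.e.\ a vertex with $N(v_i)\cap D = \{u_i\}$. These centers are the intended centers of the stars, and the $v_i$ will serve as guaranteed leaves.

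Next I would describe the assignment of leaves. For each $u_i$, designate the private neighbor $v_i$ as a leaf of the star rooted at $u_i$. Note that the $v_i$ are pairwise distinct, since $v_i = v_j$ with $i \ne j$ would force $u_i,u_j \in N(v_i) \cap D$, contradicting the private-neighbor property. Then, for every remaining vertex $w \in V - D - \{v_1,\dots,v_\gamma\}$, the dominating property of $D$ guarantees some $u_j \in N(w) \cap D$; pick one such $u_j$ arbitrarily and assign $w$ as a leaf of the star rooted at $u_j$. Let $H_i$ be the subgraph of $G$ consisting of $u_i$ together with all its assigned leaves, and the edges from $u_i$ to each of those leaves.

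It remains to verify the required properties. The subgraphs $H_1,\dots,H_\gamma$ are pairwise vertex-disjoint by construction (each vertex of $V$ lies in exactly one $H_i$: centers by their index, other vertices by their unique assigned center). Their union covers $V$ because the $u_i$ exhaust $D$ and every vertex outside $D$ has been assigned. Each $H_i$ is a star $S_{n_i}$ with center $u_i$ since every edge in $H_i$ is incident to $u_i$, and $n_i \ge 2$ because $v_i \in V(H_i)$ is a genuine leaf. Finally, the centers form $D$, which is a minimum dominating set by choice.

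The construction is essentially routine; the only point requiring care is that the stars actually have order at least two, and this is precisely what is secured by Lemma~\ref{lembollabascockayne} through the existence of external private neighbors. Without that lemma, a naive assignment could leave some $u_i$ isolated in its own star, which is disallowed by our convention that $n \ge 2$ for stars.
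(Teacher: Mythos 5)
Your proof is correct and follows exactly the route the paper intends: the paper states Lemma~\ref{lembollabascockayne} immediately before this lemma precisely so that the private neighbors guarantee each star has order at least two, and the rest is the routine assignment of dominated vertices to dominators that you describe. Nothing to add.
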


Theorem~\ref{lemgamma}
is a spectral lower bound for $\dom{G}$.
The key to its proof
was to take the star forest
that cover all vertices,
$$
F = (S_{n_1}, S_{n_2}, \ldots, S_{n_{\dom{G}} }),
$$
guaranteed by Lemma~\ref{lemspanningforest}.
By Lemma~\ref{lemstar} $m_{S_{n_i}}[0,1) = 1$,
and so $m_F [0,1) = \dom{G}$.
By part (2) of Lemma~\ref{lemsubgraph}
we have $\dom{G} = m_F [0,1) \geq m_G [0,1)$.

If instead of counting the {\em smallest} eigenvalue
in each star we count the {\em largest},
we can also obtain a spectral {\em upper} bound for $\dom{G}$.
Assume that $G$ is isolate-free.
In the construction of $F$,
each star $S_k$ contains $k \geq 2$ vertices.
When $k = 2$, the star has eigenvalues $0, 2$.
When $k \geq 3$, the star has eigenvalues $0, 1^{k-2}, k$.
So $m_{S_{n_i}}[2,n] = 1$ for all $i$.
Since these are disjoint stars, $m_F [2,n] = \dom{G}$.
By Lemma~\ref{lemsubgraph},
part (3),
$m_F [2,n] \leq  m_G [2,n]$.
We conclude that
\begin{Thr}
\label{gammatwo}
If $G$ is an isolate-free graph, then $\dom{G} \leq  m_G [2,n]$.
\end{Thr}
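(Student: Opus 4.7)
The plan is to mirror the proof strategy used for Theorem~\ref{lemgamma}, but count the \emph{largest} eigenvalue in each star of a dominating star forest rather than the smallest. Concretely, I would start by invoking Lemma~\ref{lemspanningforest} to extract, from the isolate-free graph $G$, a star forest $F = (S_{n_1}, \ldots, S_{n_{\gamma(G)}})$ whose stars are pairwise vertex-disjoint, cover $V(G)$, and whose centers form a minimum dominating set. Viewing $F$ as a spanning subgraph of $G$ (adding isolated vertices if needed to pad $F$ to a graph on $V$ — though actually since stars cover all vertices this is immediate), the Laplacian $L_F$ is a direct sum of the Laplacians of the stars.

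Next I would use Lemma~\ref{lemstar} to read off the Laplacian spectrum of each star component: for $k = 2$ the spectrum of $S_2 = K_2$ is $\{0,2\}$, and for $k \geq 3$ the spectrum of $S_k$ is $\{0,1^{k-2},k\}$. In both cases exactly one eigenvalue lies in $[2,n]$, so $m_{S_{n_i}}[2,n] = 1$ for every $i$. Because the stars are vertex-disjoint, the multiset of Laplacian eigenvalues of $F$ is the disjoint union of those of the individual stars, and hence
\[
m_F[2,n] \;=\; \sum_{i=1}^{\gamma(G)} m_{S_{n_i}}[2,n] \;=\; \gamma(G).
\]

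Finally, since $F$ is a spanning subgraph of $G$ (its edge set is contained in $E(G)$), part~(3) of Lemma~\ref{lemsubgraph} applied with $a = 2$ gives $m_F[2,n] \leq m_G[2,n]$, and combining with the previous displayed equation yields $\gamma(G) \leq m_G[2,n]$, as required.

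I do not expect any serious obstacle: the proof is entirely a matter of bookkeeping once Lemmas~\ref{lemstar}, \ref{lemsubgraph}, and \ref{lemspanningforest} are in hand. The only subtle point is the reliance on isolate-freeness, which is needed precisely so that Lemma~\ref{lemspanningforest} applies and no component of $F$ is a single vertex (whose only Laplacian eigenvalue $0$ would contribute nothing to $m_F[2,n]$, breaking the count). So the hypothesis is used exactly to ensure that each component of $F$ is a star on at least two vertices and hence supplies one eigenvalue to $[2,n]$.
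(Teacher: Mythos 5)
Your proposal is correct and follows essentially the same route as the paper's own argument: take the dominating star forest from Lemma~\ref{lemspanningforest}, observe via Lemma~\ref{lemstar} that each star contributes exactly one eigenvalue to $[2,n]$, and apply part~(3) of Lemma~\ref{lemsubgraph}. The paper also sketches an alternative proof via the matching number ($\gamma(G) \leq \beta_1(G) \leq m_G[2,n]$), but your argument matches its primary one.
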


We will use some ideas from our proof of Theorem~\ref{gammatwo}
to establish Theorem~\ref{thr:treeratiob} and Theorem~\ref{thr:ccylicratiob},
later in Section~\ref{sec:approx}.
However, there is actually an alternative and simpler proof to
Theorem~\ref{gammatwo} which we sketch.
Recall that the {\em matching number} $\beta_1(G)$,
is the size of a largest set of independent edges in $G$.
We first claim that
$\beta_1(G) \leq m_G[2,n]$
{\em for any graph} $G$.
To see this, let $F$ be the subgraph of $G$ consisting of
$\beta_1(G)$ disjoint $K_2$'s and $n - 2 \beta_1(G)$ isolated vertices.
Then $m_F[2,n] = \beta_1(G)$.
By part (3) of Lemma~\ref{lemsubgraph}, we must have $\beta_1(G) = m_F[2,n] \leq m_G[2,n]$.
Finally, it is known \cite{HHS1998}
that if $G$ is isolate-free then $\dom{G} \leq  \beta_1(G)$, and so Theorem~\ref{gammatwo} follows.

A connection between $\beta_1(G)$ and the number of Laplacian eigenvalues
{\em strictly} greater than two was shown in 2001 by Ming and Wang \cite{MingWang2001}.
They proved that if $G$ is connected and $n > 2\beta_1(G)$,
then $\beta_1(G) \leq m_G(2,n]$.

Theorem~\ref{gammatwo} strengthens a recent result
by Zhou, Zhou and Du \cite{MR3313404}
which says that for trees $T$, $m_T [0,2) \leq  n - \dom{T}$.
Note that Theorem~\ref{gammatwo} requires $G$ be isolate-free
while Theorem~\ref{lemgamma} does not.
This happens because isolates in Theorem~\ref{lemgamma}
can be disregarded as they
increase both sides of the inequality by one.
In Theorem~\ref{gammatwo} an isolate increases one side
of the inequality but not the other.
Theorem~\ref{lemgamma} and Theorem~\ref{gammatwo} imply
\begin{Cor}
\label{upperlower}
If $G$ is isolate-free then $m_G[0,1) \leq  \dom{G} \leq m_G [2,n]$.
\end{Cor}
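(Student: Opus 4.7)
The statement is an immediate consequence of the two inequalities just established, so the plan is essentially to concatenate them. First, I would invoke Theorem~\ref{lemgamma}, which gives $m_G[0,1) \leq \dom{G}$ for every graph $G$, with no isolate-free hypothesis needed. Then I would apply Theorem~\ref{gammatwo}, which gives $\dom{G} \leq m_G[2,n]$ precisely under the assumption that $G$ is isolate-free. Chaining these two inequalities yields the conclusion.

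There is no real obstacle here: the hypothesis of the corollary (isolate-free) is exactly what is needed to activate Theorem~\ref{gammatwo}, and Theorem~\ref{lemgamma} applies unconditionally, so no separate argument or case analysis is required. The only thing worth a brief remark in the write-up is why we cannot drop the isolate-free hypothesis for the upper bound: as noted in the paragraph preceding the corollary, an isolated vertex contributes $0$ to the Laplacian spectrum, hence increases $\dom{G}$ by one but leaves $m_G[2,n]$ unchanged, so the upper inequality can genuinely fail without this hypothesis.

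In terms of execution, the entire proof is a single sentence of the form: \emph{By Theorem~\ref{lemgamma}, $m_G[0,1) \leq \dom{G}$; since $G$ is isolate-free, Theorem~\ref{gammatwo} gives $\dom{G} \leq m_G[2,n]$; combining these yields the claim.} No additional lemmas, computations, or constructions are needed beyond what has already been developed in Section~\ref{sec:upperbound}.
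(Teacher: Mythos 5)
Your proposal is correct and matches the paper exactly: the corollary is stated there as an immediate consequence of chaining Theorem~\ref{lemgamma} with Theorem~\ref{gammatwo}, with no further argument. Your added remark about why the isolate-free hypothesis is needed for the upper bound is accurate and mirrors the paper's own discussion preceding the corollary.
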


It seems interesting in its own right that
\begin{Cor}
If $G$ is isolate-free, then $m_G[0,1) \leq  m_G [2,n]$.
\end{Cor}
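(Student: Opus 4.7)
The plan is to observe that this corollary is an immediate consequence of the preceding Corollary~\ref{upperlower} by transitivity. That corollary asserts the two-sided inequality $m_G[0,1) \leq \dom{G} \leq m_G[2,n]$ whenever $G$ is isolate-free, and chaining the two inequalities yields $m_G[0,1) \leq m_G[2,n]$. So the proof is essentially a single line once Corollary~\ref{upperlower} is in hand.

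Before writing this one-line deduction, I would pause to verify that the isolate-free hypothesis genuinely cannot be dropped, because the statement itself no longer mentions $\dom{G}$ and one might hope for a more general assertion. The point is that each isolated vertex contributes an eigenvalue $0$ to the multiplicity counted by $m_G[0,1)$ but contributes nothing to $m_G[2,n]$. Taking $G = \bar{K_n}$ gives $m_G[0,1) = n$ and $m_G[2,n] = 0$, showing the hypothesis is essential.

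If I wanted to give a direct proof that does not route through $\dom{G}$, the natural approach would be to invoke Lemma~\ref{lemspanningforest} to obtain a spanning star forest $F = (S_{n_1}, \ldots, S_{n_{\dom{G}}})$ of $G$. By Lemma~\ref{lemstar} (together with the separate case $S_2$, whose spectrum is $0,2$), each star contributes exactly one eigenvalue to $[0,1)$ and exactly one to $[2,n]$, so $m_F[0,1) = \dom{G} = m_F[2,n]$. Parts (2) and (3) of Lemma~\ref{lemsubgraph} then give $m_G[0,1) \leq m_F[0,1) = m_F[2,n] \leq m_G[2,n]$, which is the desired conclusion.

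There is no substantive obstacle here; the corollary is a formal consequence of what has already been established, and its interest lies in the fact that the bound is stated purely in terms of spectral data with no explicit reference to $\dom{G}$.
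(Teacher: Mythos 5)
Your proposal is correct and matches the paper's (implicit) argument: the corollary is stated right after Corollary~\ref{upperlower} with no separate proof, precisely because it follows by chaining $m_G[0,1) \leq \dom{G} \leq m_G[2,n]$. Your supplementary remarks on the necessity of the isolate-free hypothesis and the direct star-forest route are accurate but not needed.
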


When combined with a known lower bound on
$m_T[0,2)$ for trees, Theorem~\ref{lemgamma} implies something
interesting about the interval $[1,2)$.
\begin{Cor}
If $T$ is a tree, then $m_T[1,2) \geq \theratio{n}{2} - \dom{T}$.
\end{Cor}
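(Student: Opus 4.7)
The plan is to split the interval $[0,2)$ into $[0,1)$ and $[1,2)$, so that
\[
m_T[1,2) \;=\; m_T[0,2) \;-\; m_T[0,1).
\]
Theorem~\ref{lemgamma} already hands us an upper bound for the second term, $m_T[0,1) \leq \gamma(T)$, which is valid for any graph and in particular for any tree. Thus subtracting,
\[
m_T[1,2) \;\geq\; m_T[0,2) \;-\; \gamma(T).
\]

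Next, I would invoke the "known lower bound on $m_T[0,2)$ for trees" alluded to in the statement preceding the corollary, namely the result in the literature that every tree $T$ on $n$ vertices satisfies
\[
m_T[0,2) \;\geq\; \left\lceil \tfrac{n}{2}\right\rceil.
\]
(Equivalently, the number of Laplacian eigenvalues of $T$ that are $\geq 2$ is at most $\lfloor n/2 \rfloor$; this is a classical distribution result for trees.) Substituting into the previous displayed inequality yields exactly
\[
m_T[1,2) \;\geq\; \left\lceil \tfrac{n}{2}\right\rceil - \gamma(T),
\]
which is the desired conclusion.

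The only nontrivial ingredient is the tree-specific lower bound $m_T[0,2)\geq \lceil n/2\rceil$; everything else is a one-line combination with Theorem~\ref{lemgamma}. The main obstacle, therefore, is simply to locate and cite this auxiliary fact — once it is in hand, the corollary follows by subtraction with no further spectral argument. No casework, no interlacing, no additional construction is required.
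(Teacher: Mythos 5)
Your proof is correct and follows exactly the paper's own argument: decompose $m_T[1,2) = m_T[0,2) - m_T[0,1)$, bound the first term below by $\lceil n/2 \rceil$ (the tree distribution result of Braga et al., cited in the paper as Theorem 4.1 of \cite{Braga2013}), and bound the second term above by $\gamma(T)$ via Theorem~\ref{lemgamma}. Nothing to add.
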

\begin{proof}
We have
\begin{eqnarray*}
m_T[1,2) & = & m_T [0,2) - m_T[0,1) \\
         & \geq & \theratio{n}{2} - m_T[0,1) \\
         & \geq & \theratio{n}{2} - \dom{T}
\end{eqnarray*}
The first inequality follows by the bound $m_T[0,2) \geq \theratio{n}{2}$
for trees given in \cite[Thr. 4.1]{Braga2013}.
The second inequality follows from Theorem~\ref{lemgamma}.
\end{proof}

\section{Applications}
\label{sec:apps}
Recall that the {\em distance} between vertices $u$ and $v$ is the
number of edges in a shortest path between them,
and the graph's {\em diameter}, $\diam (G)$, is the greatest distance
between any two vertices.
It is known \cite{GroneMerrisSunder} that for trees $T$,
$\floorratio { \diam (T) }{2}$
is a lower bound for both $m_T(0,2)$ and $m_T(2,n]$.
For $G$ connected, it is also known \cite{HHS1998} that
$\frac{ 1 + \diam(G) }{3} \leq \dom{G}$,
so Theorem~\ref{gammatwo} implies
\begin{Cor}
\label{diam-upper}
For connected graphs $G$, $\frac{ 1 + \diam(G) }{3} \leq m_G [2,n]$.
\end{Cor}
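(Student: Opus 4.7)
The plan is a direct two-step chain of inequalities, and in fact the sentence immediately preceding the corollary essentially dictates the proof. I would first invoke the known lower bound from \cite{HHS1998}, namely $\frac{1+\diam(G)}{3} \leq \dom{G}$ for connected $G$, and then apply Theorem~\ref{gammatwo} to get $\dom{G} \leq m_G[2,n]$. Composing these two inequalities yields the claim.

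The one preliminary check I would spell out is that Theorem~\ref{gammatwo} is applicable here: a connected graph on $n \geq 2$ vertices is isolate-free, since every vertex necessarily has a neighbor. The statement is implicitly restricted to $n \geq 2$, because for $n=1$ the diameter is $0$ but $m_G[2,n] = 0$, and $\tfrac{1}{3} \leq 0$ fails; this edge case is the only subtlety worth noting.

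There is no genuine obstacle: the proof can be written in two lines as
\begin{equation*}
\frac{1+\diam(G)}{3} \;\leq\; \dom{G} \;\leq\; m_G[2,n],
\end{equation*}
where the first step cites \cite{HHS1998} and the second step applies Theorem~\ref{gammatwo} using the observation that connectedness (with $n \geq 2$) implies isolate-freeness.
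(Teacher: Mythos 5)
Your proof is correct and is exactly the paper's argument: the bound $\frac{1+\diam(G)}{3} \leq \dom{G}$ from \cite{HHS1998} composed with Theorem~\ref{gammatwo}, using that a connected graph (on at least two vertices) is isolate-free. Your remark about the trivial $n=1$ case is a reasonable extra precaution but not something the paper dwells on.
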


\paragraph{\bf{Nordhaus-Gaddum inequalities}}
A {\em Nordhaus-Gaddum} inequality is a bound on the sum or product
of a parameter for a graph $G$ and its complement $\bar{G}$.
For an overview of Nordhaus-Gaddum inequalities for domination-related parameters
we refer to Chapter 10 in \cite{HHS1998}.
A result of Jaeger and Payan \cite{Jaeger72} says that if $G$ is a graph
then
\begin{eqnarray}
\dom{G} + \dom{\bar{G}} & \leq & n + 1  \label{NGaa} \\
\dom{G} \dom{\bar{G}} & \leq & n
\label{NGbb}
\end{eqnarray}
and these bounds are tight.
The following theorem
by Cockayne and Hedetniemi
characterizes when
equality occurs in (\ref{NGaa}).
\begin{Thr}[\cite{Cockayne77}]
\label{thr:cock-hedet}
For any graph $G$, $\dom{G} + \dom{\bar{G}}  \leq  n + 1$ with equality if and only if
$G = K_n$ or $G = \bar{K_n}$.
\end{Thr}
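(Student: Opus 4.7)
The plan is to prove the two implications separately, noting that the underlying inequality $\dom{G} + \dom{\bar{G}} \leq n + 1$ is already supplied by (\ref{NGaa}); only the equality characterization requires work.

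For the sufficient direction ($\Leftarrow$), I would simply verify that $\dom{K_n} = 1$ and $\dom{\bar{K_n}} = n$ (in an edgeless graph every vertex must belong to any dominating set), so either extremal choice achieves the bound $n + 1$.

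For the necessary direction ($\Rightarrow$), I would argue the contrapositive: assume $G \notin \{K_n,\bar{K_n}\}$ and show $\dom{G} + \dom{\bar{G}} \leq n$. The argument splits according to whether one of the graphs has an isolated vertex. Suppose first that $G$ has an isolated vertex $v$. Then $v$ is adjacent in $\bar{G}$ to every other vertex, so $\{v\}$ dominates $\bar{G}$ and $\dom{\bar{G}} = 1$. Next I would observe that $\dom{G} = n$ forces $G$ to be edgeless, since any edge $uv$ permits $V \setminus \{u\}$ to dominate. Hence $G \neq \bar{K_n}$ yields $\dom{G} \leq n - 1$ and the sum is at most $n$. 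The symmetric situation, where $\bar{G}$ has an isolated vertex, forces $G = K_n$ if equality held.

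If neither graph has an isolated vertex, I would invoke Ore's classical bound: any graph with minimum degree at least $1$ satisfies $\dom{\cdot} \leq n/2$. Applying this to both $G$ and $\bar{G}$ yields $\dom{G} + \dom{\bar{G}} \leq n$, strictly less than $n + 1$. The main obstacle is to organise the case split cleanly so that the boundary behaviour — a vertex isolated in one graph is universal in the other — is correctly tied to the extremal configurations $K_n$ and $\bar{K_n}$; the underlying ingredients (the closed-neighbourhood argument and Ore's theorem) are all standard.
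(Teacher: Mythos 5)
Your argument is correct, but note that the paper does not prove this statement at all: it is quoted verbatim from Cockayne and Hedetniemi \cite{Cockayne77} and used as a black box in the proof of Theorem~\ref{thr:our-nord}, so there is no in-paper proof to compare against. On its own merits, your proof is sound and complete. The sufficiency check is immediate. For necessity, your case split is exhaustive and the boundary cases mesh correctly: for $n\geq 2$ at most one of $G,\bar{G}$ can have an isolated vertex (an isolate of $G$ is universal in $\bar{G}$), and in the case where, say, $G$ has an isolate $v$, you correctly get $\gamma(\bar{G})=1$ from the universality of $v$ in $\bar{G}$ and $\gamma(G)\leq n-1$ from the existence of an edge (guaranteed by $G\neq\bar{K_n}$), giving a sum of at most $n$. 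The remaining case, where both $G$ and $\bar{G}$ are isolate-free, is exactly where Ore's bound $\gamma\leq n/2$ applies to each graph, again capping the sum at $n$. This is essentially the classical argument; the only external ingredient is Ore's theorem, which is standard and correctly invoked (it requires precisely the isolate-free hypothesis you have arranged). One could tighten the exposition slightly by noting explicitly that the $n=1$ trivial case falls under the equality configuration, but nothing is missing.
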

We can use this to obtain the following:
\begin{Thr}
\label{thr:our-nord}
For any graph $G$, $m_G[0,1) + m_{\bar{G}}[0,1) \leq  n + 1$
with equality if and only if $G = K_n$ or $G = \bar{K_n}$.
\end{Thr}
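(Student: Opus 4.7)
The plan is to chain two inequalities: Theorem~\ref{lemgamma} applied to both $G$ and $\bar{G}$, followed by the Cockayne--Hedetniemi bound (Theorem~\ref{thr:cock-hedet}). Specifically, I would write
\[
m_G[0,1) + m_{\bar{G}}[0,1) \;\leq\; \dom{G} + \dom{\bar{G}} \;\leq\; n+1,
\]
where the first inequality is two applications of Theorem~\ref{lemgamma} and the second is Theorem~\ref{thr:cock-hedet}. This immediately gives the stated upper bound.

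For the equality characterization, the easier direction is to verify that $G = K_n$ and $G = \bar{K_n}$ both achieve $n+1$. Using Lemma~\ref{lemlaplaciansum} (or directly), $K_n$ has Laplacian spectrum $0, n^{n-1}$, so $m_{K_n}[0,1) = 1$; meanwhile $\bar{K_n}$ is the empty graph with spectrum $0^n$, giving $m_{\bar{K_n}}[0,1) = n$. The two sums are $1+n$ and $n+1$, as required. By complementation this also handles $G = \bar{K_n}$.

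For the converse, suppose $m_G[0,1) + m_{\bar{G}}[0,1) = n+1$. Then the chain above must be tight at every step, which forces in particular $\dom{G} + \dom{\bar{G}} = n+1$. By the equality case of Theorem~\ref{thr:cock-hedet}, this forces $G = K_n$ or $G = \bar{K_n}$.

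I do not anticipate a significant obstacle: the argument is essentially a two-line reduction, and the main content (the spectral-to-domination inequality and the Cockayne--Hedetniemi characterization) is already in place. The only thing to be careful about is making sure the equality case transfers cleanly through both inequalities, which it does because Theorem~\ref{thr:cock-hedet} already provides an \emph{iff}, so equality in the composite statement forces equality in the domination bound, and then the characterization is handed to us.
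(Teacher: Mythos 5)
Your proposal is correct and follows essentially the same route as the paper: the same sandwich chain combining Theorem~\ref{lemgamma} with the Cockayne--Hedetniemi bound, the same direct verification of the spectra of $K_n$ and $\bar{K_n}$ for the forward equality direction, and the same argument that equality in the sum forces $\dom{G}+\dom{\bar{G}}=n+1$ so that Theorem~\ref{thr:cock-hedet} finishes the converse. No gaps.
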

\begin{proof}
From Theorem~\ref{lemgamma} and (\ref{NGaa}) we must have
\begin{equation}
\label{eq:sandwich}
m_G[0,1) + m_{\bar{G}}[0,1) \leq  \dom{G} + \dom{\bar{G}} \leq  n + 1
\end{equation}
for any $G$.
Since $m_{K_n} [0,1) = 1$ and $m_{ \bar{ K_n }} [0,1) = n$,
we must have equality
if $G = K_n$ or $G = \bar{K_n}$.
Conversely if $m_G[0,1) + m_{\bar{G}}[0,1) =  n + 1$, then (\ref{eq:sandwich}) forces
$\dom{G} + \dom{\bar{G}} = n + 1$.
By Theorem~\ref{thr:cock-hedet} it follows that $G = K_n$ or $G = \bar{K_n}$.
\end{proof}

From Theorem~\ref{lemgamma} and (\ref{NGbb}) we also have
\begin{Thr}
\label{thr:NGbb}
For any graph $G$, $m_G[0,1) \cdot m_{\bar{G}}[0,1) \leq  n$.
\end{Thr}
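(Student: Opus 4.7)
The plan is to mimic the proof strategy used for Theorem~\ref{thr:our-nord}, but now sandwich the product $m_G[0,1)\cdot m_{\bar G}[0,1)$ between the corresponding product of domination numbers and $n$, using the multiplicative Nordhaus--Gaddum bound (\ref{NGbb}) in place of the additive one (\ref{NGaa}).

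More precisely, I would first apply Theorem~\ref{lemgamma} to both $G$ and $\bar G$ to get the two inequalities $m_G[0,1)\leq \dom{G}$ and $m_{\bar G}[0,1)\leq \dom{\bar G}$. Since all four quantities are nonnegative integers (in fact, the $m$-values are at least $1$ because $0$ is always a Laplacian eigenvalue, so $\dom{G}\geq 1$ as well), I can multiply these inequalities termwise to obtain
\[
m_G[0,1)\cdot m_{\bar G}[0,1)\;\leq\;\dom{G}\cdot\dom{\bar G}.
\]
Then I invoke the Jaeger--Payan inequality (\ref{NGbb}), $\dom{G}\cdot\dom{\bar G}\leq n$, to chain the two bounds together and conclude $m_G[0,1)\cdot m_{\bar G}[0,1)\leq n$.

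There is essentially no real obstacle here beyond noting the nonnegativity that justifies the termwise multiplication. I would also mention, for completeness, that one does not get a characterization of equality as easy as in Theorem~\ref{thr:our-nord}: equality in the product inequality forces both $m_G[0,1)=\dom{G}$, $m_{\bar G}[0,1)=\dom{\bar G}$, and $\dom{G}\dom{\bar G}=n$ simultaneously, which is a more delicate condition than the one arising in the sum case and is not needed for the stated theorem.
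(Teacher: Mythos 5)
Your proposal is correct and is exactly the paper's argument: the paper derives Theorem~\ref{thr:NGbb} directly from Theorem~\ref{lemgamma} applied to $G$ and $\bar G$ together with the Jaeger--Payan product bound (\ref{NGbb}). The added remarks on nonnegativity and on the equality case are fine but not needed.
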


Recall \cite[Theorem 3.6]{mohar}
that if $G$ has Laplacian eigenvalues
$$
0 = \mu_1 \leq \mu_2 \leq \ldots \leq \mu_n
$$
then
the Laplacian eigenvalues
of $\bar{G}$ are:
$$
0, ~~ n - \mu_n, ~~ n - \mu_{n-1}, ~~~ \ldots,  ~~ n - \mu_2
$$
It follows that $m_{\bar{G}}[0,1) = m_G(n-1,n] + 1$.
Then from Theorem~\ref{thr:NGbb}
\begin{eqnarray*}
m_G[0,1) \cdot m_G(n-1,n] & < &   m_G[0,1) \cdot  ( m_G(n-1,n] + 1 )   = \\
m_G[0,1) \cdot  m_{\bar{G}}[0,1)  & \leq   & n .
\end{eqnarray*}
We have
\begin{Thr}
\label{thr:intproducts}
For any graph $G$, $m_G[0,1)  \cdot m_G(n-1,n] < n$.
\end{Thr}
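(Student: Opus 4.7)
The plan is to reduce the statement directly to Theorem~\ref{thr:NGbb} by translating eigenvalues of $G$ near $n$ into eigenvalues of $\bar{G}$ near $0$. Concretely, I would invoke the standard fact that the Laplacian spectrum of $\bar{G}$ consists of a single $0$ together with the values $n - \mu_i$ ranging over the nonzero Laplacian eigenvalues $\mu_i$ of $G$. Under the reflection $\mu \mapsto n - \mu$, the interval $(n-1,n]$ is carried bijectively onto $[0,1)$, so the nonzero eigenvalues of $G$ lying in $(n-1,n]$ correspond one-to-one with the nonzero $n - \mu_i$ that land in $[0,1)$. Accounting for the extra $0$ eigenvalue of $\bar{G}$ then gives the clean identity $m_{\bar{G}}[0,1) = m_G(n-1,n] + 1$.

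With that identity in hand, the second step is to feed it into Theorem~\ref{thr:NGbb}, which asserts $m_G[0,1) \cdot m_{\bar{G}}[0,1) \leq n$. Substituting produces
\[
m_G[0,1) \cdot \bigl(m_G(n-1,n] + 1\bigr) \leq n,
\]
and since $0$ is always a Laplacian eigenvalue we have $m_G[0,1) \geq 1$. This is precisely what makes the conclusion \emph{strict}: dropping the $+1$ on the left strictly decreases that side by at least $m_G[0,1) \geq 1$, yielding $m_G[0,1) \cdot m_G(n-1,n] < n$.

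The argument is short, and the only place to be careful is matching interval endpoints under the reflection: $n \mapsto 0$ and $n-1 \mapsto 1$, so the half-open $(n-1,n]$ does land exactly on $[0,1)$. There is no real obstacle beyond this bookkeeping; the substantive content has already been packed into Theorem~\ref{thr:NGbb} and the complement identity, so the remainder is essentially a one-line substitution together with the observation that $m_G[0,1) \geq 1$.
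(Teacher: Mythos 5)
Your proof is correct and is essentially identical to the paper's: both use the complement spectrum identity $m_{\bar{G}}[0,1) = m_G(n-1,n] + 1$, substitute into Theorem~\ref{thr:NGbb}, and get strictness from $m_G[0,1) \geq 1$. (One tiny nit: the values $n-\mu_i$ landing in $[0,1)$ need not be nonzero, e.g.\ when $\mu_i = n$, but this does not affect the counting identity you state.)
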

We conclude that any graph of order $n$ must have fewer than $\sqrt{n}$
Laplacian eigenvalues in at least one of the intervals $[0,1)$ or $(n-1,n]$.

\vspace{.1in}
\paragraph{\bf{Gallai-type theorems}}
A {\em Gallai-type} theorem has the form $x(G) + y(G) = n$ where $x(G)$ and $y(G)$
are graph parameters.
There are exactly $n$
Laplacian eigenvalues,
so the equation
\begin{equation}
\label{eq:gallai-eigen}
m_G[0,1) + m_G [1,n] = n
\end{equation}
can be regarded as a trivial Gallai-type theorem.
A {\em spanning forest} of a graph $G$ is a spanning
subgraph which contains no cycles.
Let $\varepsilon(G)$ denote the maximum number of pendant edges
in a spanning forest of $G$.
\begin{Thr} [ Nieminen \cite{Nieminen74} ]
\label{thr:Nieminen}
For any graph $G$, $\dom{G} + \varepsilon(G) = n$.
\end{Thr}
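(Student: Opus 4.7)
The plan is to establish Nieminen's equality by proving both inequalities via explicit constructions passing between dominating sets and spanning forests, one for each direction.

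For the lower bound $\varepsilon(G) \geq n - \dom{G}$, I would take a minimum dominating set $D$ of size $\dom{G}$. For each $v \in V \setminus D$, choose a neighbor $f(v) \in D$; this is possible because every isolated vertex of $G$ necessarily lies in $D$, so each $v \notin D$ is non-isolated and adjacent to $D$. Let $H$ be the spanning subgraph of $G$ with edge set $\{v f(v) : v \in V \setminus D\}$. Every edge of $H$ has its $(V \setminus D)$-endpoint of degree exactly one in $H$, so $H$ contains no cycle and is therefore a spanning forest with $n - \dom{G}$ edges, every one of which is pendant.

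For the upper bound $\dom{G} \leq n - \varepsilon(G)$, I would take a spanning forest $F$ achieving $\varepsilon(G)$ pendant edges. For every pendant edge $e$ of $F$, select one leaf endpoint $\ell_e$; if both endpoints of $e$ are leaves (i.e., $e$ is a $K_2$-component of $F$), pick either one. Since a leaf has $F$-degree one, distinct pendant edges yield distinct selected leaves, so $L := \{\ell_e : e \text{ pendant in } F\}$ has cardinality $\varepsilon(G)$. I claim $D := V \setminus L$ dominates $G$: given $\ell = \ell_e \in L$ with pendant edge $e = \{\ell, u\}$, if $u$ has $F$-degree at least two then $u \notin L$, while if $u$ is a leaf then $e$ is the only edge incident to $u$ and we chose $\ell$ rather than $u$, so again $u \notin L$. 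In either case $u \in D$ dominates $\ell$, giving $\dom{G} \leq |D| = n - \varepsilon(G)$.

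The argument is two short, parallel constructions, and I do not anticipate a real obstacle. The only genuine subtlety is the $K_2$-component case in the second direction, which is resolved by the explicit tie-breaking choice of $\ell_e$; both directions are tight, as the two constructions show that neither inequality has any slack.
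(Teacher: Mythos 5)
The paper states this result as a citation to Nieminen and gives no proof of its own, so there is nothing internal to compare against; your argument stands on its own and is correct. Both directions check out: in the first, each $v \in V \setminus D$ has degree exactly one in $H$ (it cannot equal $f(w)$ for any $w$ since $f(w) \in D$), so $H$ is acyclic with $n - \dom{G}$ pendant edges; in the second, the injectivity of $e \mapsto \ell_e$ follows from leaves having $F$-degree one, and your case analysis on the other endpoint $u$ (including the $K_2$ tie-break) correctly shows $u \notin L$. This is essentially the classical proof of Nieminen's Gallai-type identity, and including it would make the paper's use of Theorem~\ref{thr:Nieminen} self-contained.
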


\begin{Cor}
\label{greaterthanone}
For any graph $G$,
$\varepsilon (G) \leq m_G [1,n]$.
\end{Cor}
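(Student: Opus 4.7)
The plan is to chain together the three ingredients that are already on the table: the trivial Gallai-type identity (\ref{eq:gallai-eigen}), the spectral bound in Theorem~\ref{lemgamma}, and Nieminen's Gallai-type theorem (Theorem~\ref{thr:Nieminen}). Each one by itself converts between a count of eigenvalues in a subinterval and a graph invariant, and stacking them yields $\varepsilon(G) \leq m_G[1,n]$ almost mechanically.

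Concretely, I would start by rewriting the target in the complementary form. By (\ref{eq:gallai-eigen}) one has
\[
m_G[1,n] = n - m_G[0,1),
\]
so proving $\varepsilon(G) \leq m_G[1,n]$ is equivalent to proving $m_G[0,1) \leq n - \varepsilon(G)$. Next I would invoke Theorem~\ref{thr:Nieminen} to rewrite $n - \varepsilon(G) = \dom{G}$. What remains is exactly the content of Theorem~\ref{lemgamma}, namely $m_G[0,1) \leq \dom{G}$, which applies to arbitrary graphs (no isolate-free hypothesis needed), so the chain closes without any side condition on $G$.

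There is essentially no obstacle here; the only thing to be careful about is that Theorem~\ref{lemgamma} and Nieminen's theorem are both valid for all graphs $G$ (not only isolate-free ones), so the corollary inherits that full generality. A clean way to present the proof is as a three-line display:
\[
\varepsilon(G) \;=\; n - \dom{G} \;\leq\; n - m_G[0,1) \;=\; m_G[1,n],
\]
citing Theorem~\ref{thr:Nieminen}, Theorem~\ref{lemgamma}, and (\ref{eq:gallai-eigen}) in that order. No new lemma, inductive construction, or spectral computation is required.
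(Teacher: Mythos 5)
Your proof is correct and is essentially identical to the paper's: both combine Nieminen's theorem ($\varepsilon(G) = n - \dom{G}$), Theorem~\ref{lemgamma}, and the identity (\ref{eq:gallai-eigen}) in the same chain. No differences worth noting.
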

\begin{proof}
From Theorem~\ref{lemgamma}
and (\ref{eq:gallai-eigen})
we know that
\begin{equation}
\label{ineq:dom1n}
n - \dom{G} \leq m_G [1,n]
\end{equation}
the left side being $\varepsilon (G)$
by Theorem~\ref{thr:Nieminen}.
\end{proof}

\begin{Cor}
$\dom{G} = m_G[0,1)$ if and only if $\varepsilon (G) = m_G [1,n]$.
\end{Cor}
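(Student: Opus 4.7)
The plan is to read off the corollary from two identities that are already available: the trivial Gallai-type identity $m_G[0,1) + m_G[1,n] = n$ from equation (\ref{eq:gallai-eigen}), and Nieminen's theorem $\dom{G} + \varepsilon(G) = n$ (Theorem~\ref{thr:Nieminen}). Both sides of the biconditional are then statements about ``complementary parts of $n$'', so one should pass essentially immediately into the other.

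More concretely, I would argue as follows. Assume first that $\dom{G} = m_G[0,1)$. Subtracting this equation from $n$ and applying Nieminen's theorem on the left and (\ref{eq:gallai-eigen}) on the right gives
\begin{equation*}
\varepsilon(G) \;=\; n - \dom{G} \;=\; n - m_G[0,1) \;=\; m_G[1,n],
\end{equation*}
which is the desired conclusion. For the converse, if $\varepsilon(G) = m_G[1,n]$, then the same two identities, again used to convert each side to its complement in $n$, yield $\dom{G} = n - \varepsilon(G) = n - m_G[1,n] = m_G[0,1)$.

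Since both Theorem~\ref{thr:Nieminen} and (\ref{eq:gallai-eigen}) are available in the excerpt, there is no real obstacle here; the only thing to check is that one does not implicitly use Theorem~\ref{lemgamma} or inequality (\ref{ineq:dom1n}), which would make the argument one-directional. Presenting the proof as a chain of equalities through $n$, rather than as a chain of inequalities tightened to equality, avoids that pitfall and makes the biconditional manifest.
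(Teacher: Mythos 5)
Your proof is correct and is exactly the argument the paper intends: the paper's own proof simply cites (\ref{eq:gallai-eigen}) and Theorem~\ref{thr:Nieminen}, and your chain of equalities through $n$ spells out precisely how those two identities combine. Your remark about avoiding Theorem~\ref{lemgamma} is a sensible precaution but not a divergence from the paper.
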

\begin{proof}
This follows from (\ref{eq:gallai-eigen}) and Theorem~\ref{thr:Nieminen}.
\end{proof}

Berge \cite{Berge73} gives an early bound for $\dom{G}$:
\begin{equation}
\label{berge}
\dom{G} + \Delta (G) \leq n
\end{equation}
where $\Delta$ denotes the maximum vertex degree.
In \cite{DomkeDunbarMarkus97} the authors study when equality
in (\ref{berge}) occurs.
Combining (\ref{ineq:dom1n})
and (\ref{berge}) give
\begin{Thr}
\label{thr:maxdegree}
For any graph $G$, $m_G [1,n] \geq \Delta(G)$.
\end{Thr}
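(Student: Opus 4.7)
The plan is to obtain the inequality by simply chaining Berge's bound with the inequality (\ref{ineq:dom1n}) that was already derived from Theorem~\ref{lemgamma}. Rewriting Berge's bound (\ref{berge}) as $\Delta(G) \leq n - \dom{G}$ and combining it with (\ref{ineq:dom1n}), which asserts $n - \dom{G} \leq m_G[1,n]$, immediately yields the desired bound $\Delta(G) \leq m_G[1,n]$.

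Concretely, I would first quote (\ref{berge}) in the form $\Delta(G) \leq n - \dom{G}$, then note that the interval identity (\ref{eq:gallai-eigen}) together with Theorem~\ref{lemgamma} gives $n - \dom{G} \leq m_G[1,n]$ (this is exactly the content of (\ref{ineq:dom1n})). Composing these two inequalities finishes the proof in one line.

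There is no real obstacle here: both ingredients are already in hand, and the statement is just the transitive closure of the two inequalities. The only thing to be slightly careful about is that Berge's bound requires no assumption beyond $G$ being a graph (isolates contribute $0$ to $\Delta(G)$ and do not affect the inequality), and that (\ref{ineq:dom1n}) likewise holds for all $G$ since Theorem~\ref{lemgamma} makes no isolate-free hypothesis. Hence the conclusion is valid for every graph $G$, matching the statement of Theorem~\ref{thr:maxdegree}.
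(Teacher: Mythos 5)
Your proof is correct and is exactly the argument the paper uses: combine Berge's bound (\ref{berge}) in the form $\Delta(G) \leq n - \dom{G}$ with inequality (\ref{ineq:dom1n}), which comes from Theorem~\ref{lemgamma} and the identity (\ref{eq:gallai-eigen}). Nothing further is needed.
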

As a simple application to Theorem~\ref{thr:maxdegree},
suppose we are given a list $\sigma$
$$
0  = \mu_n \leq  \mu_{n-1} \leq \ldots \leq \mu_1
$$
of non-negative numbers
and wish to know if there is a graph $G$ whose  Laplacian spectrum is $\sigma$.
Then Theorem~\ref{thr:maxdegree} imposes a {\em necessary} condition on $G$.
Let $B = | \{ i : \mu_i \geq 1 \} |$.
Any graph $G$ such that $\spec(G) = \sigma$ must have vertices whose
degrees are bounded by $B$.

\section{Approximating $\dom{G}$}
\label{sec:approx}
In this section we explain why it is hard to approximate $\dom{G}$
with a polynomial computable spectral quantity
of the form $m_G[a,b]$.  We show that $m_G[0,1)$ and $m_G[2,n]$
do not even achieve logarithmic approximation ratios.
Yet, for certain classes of graphs such as
trees and $c$-cyclic graphs, $\frac{m_G [2,n]} { \dom{G} }$ is bounded by a constant.

\vspace{.1in}
\paragraph{\bf{Inapproximability}}
It is well-known that the decision problem
DOMINATING SET
is NP-complete \cite{GareyandJohnson79},
even for planar graphs.
In the approximation algorithm literature
the problem is classified as class II in the taxonomy
of NP-complete problems given in \cite{AroraLund}.
Roughly speaking, this means that approximating with better
than a logarithmic ratio is hard.
A problem is called {\em quasi-NP-hard} if a polynomial-time algorithm
for it could be used to solve all NP problems in time $2^{\poly(\log n)}$.
Thus the notion is slightly weaker than NP-hard.

Lund and Yannakakis \cite[Thr. 3.6]{Lund1994} showed that it is quasi-NP-hard
to compute a polynomial-time function $f(G) \geq \dom{G}$ for which
$$
\frac{f(G)}{\dom{G}} \leq c \log_2 n
$$
when $0 < c < \frac{1}{4}$.
Letting $g(G) = \frac { f(G) }{ c \log_2 n}$, we see this is equivalent to
computing a polynomial time $g(G) \leq \dom{G}$ for which
$$
\frac{\dom{G}}{g(G)} \leq c \log_2 n .
$$

Good approximations of $\dom{G}$ do exist.
The fractional domination number $\fractional{G}$
can be computed in polynomial time using linear programming.
Given a vertex ordering,
we can compute in polynomial time
an approximation $\gamma_g(G)$
for $\dom{G}$ using the greedy domination algorithm.
Clearly for any graph $G$,
$$
\fractional{G} \leq \dom{G} \leq \gamma_g(G).
$$
In \cite{CGH2015} Chappell, Gimbel and Hartman
proved that $\frac{ \gamma_g(G) }{ \fractional{G} }$
is in  $O(\log n)$.
It follows that
both
$\frac{ \gamma_g(G) }{ \dom{G} }$
and
$\frac{ \dom{G} }{ \fractional{G} }$
must also be in $O(\log n)$.
Note this result does not contradict that of Lund and Yannakakis,
provided the constants of proportionality are sufficiently large.

\begin{figure}[h!]
\begin{center}
\begin{tikzpicture}
[scale=1,auto=left,every node/.style={circle,scale=0.8}]
\node at (0,0) [circle,draw](1) {};
\node at (.5,0) [circle,draw](2) {};
\node at (1,0) [circle,draw](3) {};
\node at (1.5,0) [circle,draw](4) {};
\node at (2,0) [circle,draw](5) {};
\node at (2.5,0) [circle,draw](6) {};
\node at (3,0) [circle,draw](101) {};
\node at (3.5,0) [circle,draw](102) {};
\node at (4,0) [circle,draw](103) {};
\node at (4.5,0) [circle,draw](104) {};
\node at (5,0) [circle,draw](105) {};
\node at (5.5,0) [circle,draw](106) {};
\node at (6,0) [circle,draw](201) {};
\node at (6.5,0) [circle,draw](202) {};
\node at (7,0) [circle,draw](203) {};
\node at (7.5,0) [circle,draw](204) {};
\node at (8,0) [circle,draw](205) {};
\node at (8.5,0) [circle,draw](206) {};
\node at (9,0) [circle,draw](301) {};
\node at (9.5,0) [circle,draw](302) {};
\node at (10,0) [circle,draw](303) {};
\node at (10.5,0) [circle,draw](304) {};
\node at (11,0) [circle,draw](305) {};
\node at (11.5,0) [circle,draw](306) {};
\node at (0,-1) [circle,draw](7) {};
\node at (.5,-1) [circle,draw](8) {};
\node at (1.0,-1) [circle,draw](9) {};
\node at (1.5,-1) [circle,draw](10) {};
\node at (2,-1) [circle,draw](11) {};
\node at (2.5,-1) [circle,draw](12) {};
\node at (3,-1) [circle,draw](107) {};
\node at (3.5,-1) [circle,draw](108) {};
\node at (4.0,-1) [circle,draw](109) {};
\node at (4.5,-1) [circle,draw](110) {};
\node at (5,-1) [circle,draw](111) {};
\node at (5.5,-1) [circle,draw](112) {};
\node at (6,-1) [circle,draw](207) {};
\node at (6.5,-1) [circle,draw](208) {};
\node at (7.0,-1) [circle,draw](209) {};
\node at (7.5,-1) [circle,draw](210) {};
\node at (8,-1) [circle,draw](211) {};
\node at (8.5,-1) [circle,draw](212) {};
\node at (9,-1) [circle,draw](307) {};
\node at (9.5,-1) [circle,draw](308) {};
\node at (10.0,-1) [circle,draw](309) {};
\node at (10.5,-1) [circle,draw](310) {};
\node at (11,-1) [circle,draw](311) {};
\node at (11.5,-1) [circle,draw](312) {};
\node at (.25,1) [circle,draw](13) {};
\node at (1.25,1) [circle,draw](14) {};
\node at (2.25,1) [circle,draw](15) {};
\node at (3.25,1) [circle,draw](113) {};
\node at (4.25,1) [circle,draw](114) {};
\node at (5.25,1) [circle,draw](115) {};
\node at (6.25,1) [circle,draw](213) {};
\node at (7.25,1) [circle,draw](214) {};
\node at (8.25,1) [circle,draw](215) {};
\node at (9.25,1) [circle,draw](313) {};
\node at (10.25,1) [circle,draw](314) {};
\node at (11.25,1) [circle,draw](315) {};
\node at (1.25,2) [circle,draw](16) {};
\node at (4.25,2) [circle,draw](116) {};
\node at (7.25,2) [circle,draw](216) {};
\node at (10.25,2) [circle,draw](316) {};
\node at (5.75,3) [circle,draw](1000) {};
\draw (1000.south) -- (16.north);
\draw (1000.south) -- (116.north);
\draw (1000.south) -- (216.north);
\draw (1000.south) -- (316.north);
\draw (1.south) -- (7.north);
\draw (2.south) -- (8.north);
\draw (4.south) -- (10.north);
\draw (3.south) -- (9.north);
\draw (5.south) -- (11.north);
\draw (6.south) -- (12.north);
\draw (13.south) -- (1.north);
\draw (13.south) -- (2.north);
\draw (14.south) -- (3.north);
\draw (14.south) -- (4.north);
\draw (15.south) -- (5.north);
\draw (15.south) -- (6.north);
\draw (16.west) -- (13.north);
\draw (16.south) -- (14.north);
\draw (16.east) -- (15.north);
\draw (101.south) -- (107.north);
\draw (102.south) -- (108.north);
\draw (104.south) -- (110.north);
\draw (103.south) -- (109.north);
\draw (105.south) -- (111.north);
\draw (106.south) -- (112.north);
\draw (113.south) -- (101.north);
\draw (113.south) -- (102.north);
\draw (114.south) -- (103.north);
\draw (114.south) -- (104.north);
\draw (115.south) -- (105.north);
\draw (115.south) -- (106.north);
\draw (116.west) -- (113.north);
\draw (116.south) -- (114.north);
\draw (116.east) -- (115.north);
\draw (201.south) -- (207.north);
\draw (202.south) -- (208.north);
\draw (204.south) -- (210.north);
\draw (203.south) -- (209.north);
\draw (205.south) -- (211.north);
\draw (206.south) -- (212.north);
\draw (213.south) -- (201.north);
\draw (213.south) -- (202.north);
\draw (214.south) -- (203.north);
\draw (214.south) -- (204.north);
\draw (215.south) -- (205.north);
\draw (215.south) -- (206.north);
\draw (216.west) -- (213.north);
\draw (216.south) -- (214.north);
\draw (216.east) -- (215.north);
\draw (301.south) -- (307.north);
\draw (302.south) -- (308.north);
\draw (304.south) -- (310.north);
\draw (303.south) -- (309.north);
\draw (305.south) -- (311.north);
\draw (306.south) -- (312.north);
\draw (313.south) -- (301.north);
\draw (313.south) -- (302.north);
\draw (314.south) -- (303.north);
\draw (314.south) -- (304.north);
\draw (315.south) -- (305.north);
\draw (315.south) -- (306.north);
\draw (316.west) -- (313.north);
\draw (316.south) -- (314.north);
\draw (316.east) -- (315.north);
%
\end{tikzpicture}
\caption{ \label{treefig} $m_T[0,1) = 24 < \dom{T} = 25$ }
\end{center}
\end{figure}

\vspace{.1in}
\paragraph{\bf{Example}}
We now construct an infinite sequence of graphs for which the ratio
$\frac{\dom{G} }{ m_G[0,1)  } \not\in O(\log n)$.
Our construction uses the tree $T$ of order $n = 65$, shown in Figure~\ref{treefig}.
It is known \cite{HJT2015} that $m_T[0,1) = 24$  and $\dom{T} = 25$.

Recall that the {\em Cartesian product} $G \times H$ of two graphs $G = (V,E)$ and $H = (W,F)$ is the graph
with vertex set $V \times W$ for which $(v_1, w_1)$ and $(v_2, w_2)$
are adjacent if and only if $v_1 = v_2$ and $w_1 w_2 \in F$ or $w_1 = w_2$ and $v_1 v_2 \in E$.

In 1968 V. G. Vizing conjectured \cite{MR0240000}
that for all graphs $G$ and $H$,
\begin{equation}
\label{vizingeq}
\dom{G} \cdot \dom{H} \leq \dom{G \times H}
\end{equation}

While this currently remains
an open problem, many partial results exist.
We say that $G$ {\em satisfies Vizing's conjecture} if (\ref{vizingeq})
holds for all graphs $H$.
Many classes of graphs are known to satisfy Vizing's conjecture.
\begin{Lem}[ Theorem 8.2, \cite{MR2864622}]
\label{lemtreesVizings}
All trees satisfy Vizing's conjecture.
\end{Lem}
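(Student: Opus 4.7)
The plan is to establish Vizing's conjecture for any tree $T$ via the classical $2$-packing reduction. Recall that a $2$-packing of a graph $G$ is a vertex set $P$ in which every two distinct members are at distance at least $3$, and write $\rho(G)$ for the maximum size of a $2$-packing. For every graph one has $\rho(G) \leq \dom{G}$, since a single dominating vertex cannot dominate two elements of a $2$-packing (they would then lie in a common closed neighborhood, forcing their distance to be at most $2$).

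The first step is to invoke the classical theorem of Meir and Moon that $\rho(T) = \dom{T}$ for every tree $T$. This is proved by induction on $|V(T)|$: choose a longest path in $T$, observe that the support vertex adjacent to its end-leaf may be taken to lie in some minimum dominating set while the end-leaf itself extends a maximum $2$-packing, then delete the closed neighborhood of that support vertex and recurse on the remaining forest. Combined with the universal inequality $\rho(G) \leq \dom{G}$ this gives equality.

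The second step is to apply the general projection lemma that any graph $G$ with $\rho(G) = \dom{G}$ satisfies Vizing's conjecture. Given any graph $H$, fix a minimum dominating set $D$ of $G \times H$ and a $2$-packing $P = \{v_1, \dots, v_k\}$ of $G$ with $k = \dom{G}$, and for each $i$ set
\[
D_i \;=\; \{\, h \in V(H) : (u,h) \in D \text{ for some } u \in N[v_i] \,\}.
\]
The key verification is that $D_i$ dominates $H$: any vertex $(v_i, h)$ of $G \times H$ is dominated by some $(u,h') \in D$, and the Cartesian-product adjacency forces either $u = v_i$ with $h' \in N[h]$, placing a closed neighbor of $h$ in $D_i$, or $u \in N(v_i)$ with $h' = h$, placing $h$ itself in $D_i$. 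Hence $|D_i| \geq \dom{H}$. Because the closed neighborhoods $N[v_i]$ are pairwise disjoint, by the $2$-packing property, each vertex of $D$ projects into at most one $D_i$, so
\[
|D| \;\geq\; \sum_{i=1}^{k} |D_i| \;\geq\; k\cdot \dom{H} \;=\; \dom{T}\,\dom{H}.
\]
The main obstacle is the first step: one must align the inductive choice of minimum dominating set with the greedy construction of a $2$-packing of the same size. The second step is essentially a short structural computation that uses nothing about $G$ beyond the equality $\rho(G) = \dom{G}$.
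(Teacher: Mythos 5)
Your proposal is correct, but it is doing something the paper does not attempt: the paper states this lemma with no proof at all, simply citing it as Theorem 8.2 of the Vizing's conjecture survey \cite{MR2864622}. What you have written is essentially the standard proof of that cited result, factored into the two classical ingredients: the Meir--Moon theorem that $\rho(T) = \dom{T}$ for every tree $T$, and the projection argument showing that any graph with $\rho(G) = \dom{G}$ satisfies Vizing's conjecture. Your verification of the projection step is sound: the sets $D_i$ are indeed dominating sets of $H$ (the case analysis on Cartesian-product adjacency is complete, including the case $(v_i,h) \in D$ itself), and the counting $|D| \geq \sum_i |D_i|$ is justified because the closed neighborhoods $N[v_i]$ are pairwise disjoint, so the fibers $D \cap (N[v_i] \times V(H))$ are disjoint and each surjects onto $D_i$ under projection. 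The only soft spot is the Meir--Moon step, which you give only as a sketch; the longest-path induction needs a little more care than you indicate (e.g., handling components of $T - N[s]$ that become isolated, and verifying both that the minimum dominating set shrinks by exactly one and that the end-leaf is at distance at least $3$ from the inherited packing), but this is a genuinely classical theorem and your outline is the right one. In short: where the paper buys the lemma off the shelf, you have supplied a correct self-contained derivation that also isolates the more general fact that $\rho(G) = \dom{G}$ suffices for Vizing's conjecture, which is strictly more information than the paper uses.
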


It is easy to show that the Cartesian product is an associative operation.
Let $G^k$ denote the Cartesian product $G \times \ldots \times G$ of $k$ copies of $G$.

\begin{Lem}
\label{lemVizingproduct}
If $G$ satisfies Vizing's conjecture, then $\dom{G}^k \leq \dom{G^k}$.
\end{Lem}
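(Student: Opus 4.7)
The plan is a straightforward induction on $k$, using the hypothesis that $G$ satisfies Vizing's conjecture against the previously proved power, together with the associativity of the Cartesian product noted just before the lemma.

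First I would handle the base case $k = 1$, which is just the trivial identity $\dom{G} = \dom{G}$. (One could equally well take $k = 2$ as the base, where $\dom{G}\cdot\dom{G} \leq \dom{G \times G}$ is precisely Vizing's conjecture for $G$ paired with itself.)

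For the inductive step, assume $\dom{G}^k \leq \dom{G^k}$. By associativity we may write $G^{k+1} = G \times G^k$. Since $G$ satisfies Vizing's conjecture, the defining inequality (\ref{vizingeq}) applies with the second factor taken to be $H = G^k$, giving
\begin{equation*}
\dom{G} \cdot \dom{G^k} \leq \dom{G \times G^k} = \dom{G^{k+1}}.
\end{equation*}
Multiplying the inductive hypothesis $\dom{G}^k \leq \dom{G^k}$ by the positive quantity $\dom{G}$ yields $\dom{G}^{k+1} \leq \dom{G} \cdot \dom{G^k}$, and chaining with the previous display completes the induction.

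There is no real obstacle here; the only subtlety worth flagging is the distinction between a graph satisfying Vizing's conjecture (the inequality holds for \emph{every} partner $H$) and the conjecture holding for one particular pair. The proof uses the former, quantified version, so that we are free to feed the growing power $G^k$ back in as the second factor at each step.
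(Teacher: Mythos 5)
Your proof is correct and follows essentially the same route as the paper's: induction on $k$ with the trivial base case $k=1$, multiplying the inductive hypothesis by $\dom{G}$, and then applying Vizing's inequality with $H = G^k$ together with associativity of the Cartesian product. The remark about needing the quantified version of the hypothesis (so that $G^k$ may serve as the partner graph) is exactly the point the paper's argument relies on.
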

\begin{proof}
By induction on $k$, the case for $k = 1$ being trivial.
Assume that $\dom{G}^k \leq \dom{G^k}$.
Using the induction assumption, the fact that $G$ satisfies Vizing's conjecture,
and the associativity of $\times$, we have
$$
\dom{G}^{k+1} = \dom{G} \dom{G}^k \leq \dom{G} \dom{G^k} \leq \dom{G \times G^k} = \dom{G^{k+1}}
$$
completing the proof.
\end{proof}

The following is well-known (See, for example, \cite[Thr. 3.5]{mohar}).
\begin{Lem}
\label{lemproduct}
Let $G$ and $H$ be graphs with Laplacian spectra
$$
0  = \mu_n \leq  \mu_{n-1} \leq \ldots \leq \mu_1
$$
and
$$
0  = \mu^{\prime}_m \leq  \mu^{\prime}_{m-1} \leq \ldots \leq \mu^{\prime}_1
$$
respectively.
Then the Laplacian spectrum of $G \times H$ is
$$
\{ \mu_i + \mu^{\prime}_j | 1 \leq i \leq n, 1 \leq j \leq m \} .
$$
\end{Lem}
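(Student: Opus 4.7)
The plan is to identify the Laplacian of the Cartesian product as a Kronecker sum and then assemble an orthonormal eigenbasis by taking tensor products of eigenvectors of $L_G$ and $L_H$. Labeling the vertex set $V \times W$ lexicographically, a direct check of the adjacency structure shows that $A_{G \times H} = A_G \otimes I_m + I_n \otimes A_H$, and the degree at $(v_i, w_j)$ is $\deg(v_i) + \deg(w_j)$, so the diagonal degree matrix decomposes as $D_{G \times H} = D_G \otimes I_m + I_n \otimes D_H$. Subtracting, I get the key identity
\begin{equation*}
L_{G \times H} = L_G \otimes I_m + I_n \otimes L_H.
\end{equation*}

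Next, I would pick orthonormal eigenbases $x_1,\dots,x_n$ of $L_G$ and $y_1,\dots,y_m$ of $L_H$, with $L_G x_i = \mu_i x_i$ and $L_H y_j = \mu'_j y_j$. Using the mixed-product rule $(A \otimes B)(C \otimes D) = AC \otimes BD$, a one-line computation yields
\begin{equation*}
\left(L_G \otimes I_m + I_n \otimes L_H\right)(x_i \otimes y_j) = \mu_i (x_i \otimes y_j) + \mu'_j (x_i \otimes y_j) = (\mu_i + \mu'_j)(x_i \otimes y_j),
\end{equation*}
so each sum $\mu_i + \mu'_j$ is an eigenvalue of $L_{G \times H}$ with eigenvector $x_i \otimes y_j$.

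To finish, I would invoke the fact that the Kronecker product preserves inner products, so the family $\{x_i \otimes y_j : 1 \leq i \leq n,\ 1 \leq j \leq m\}$ is orthonormal in $\mathbb{R}^{nm}$. Since its cardinality equals the order of $G \times H$, it is a complete eigenbasis, and therefore the multiset of Laplacian eigenvalues of $G \times H$ is exactly $\{\mu_i + \mu'_j : 1 \leq i \leq n,\ 1 \leq j \leq m\}$, as claimed.

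The one non-trivial step is verifying the decomposition $L_{G \times H} = L_G \otimes I_m + I_n \otimes L_H$, which requires carefully matching the edge rule of the Cartesian product (edges exist when one coordinate is fixed and the other is adjacent) to the block structure of the Kronecker sum; everything after that is standard linear algebra, which is why this lemma is cited as well known.
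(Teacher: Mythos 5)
Your proof is correct: the Kronecker-sum decomposition $L_{G \times H} = L_G \otimes I_m + I_n \otimes L_H$ and the tensor-product eigenbasis argument are exactly the standard proof of this fact. The paper itself offers no proof, merely citing Mohar's survey as ``well-known,'' and your argument is the one found in that reference, so there is nothing to fault and no genuinely different route to compare.
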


\begin{Lem}
\label{leminequal}
For any graphs $G$ and $H$, $m_{G \times H} [0,1) \leq m_G [0,1) \cdot m_H [0,1)$.
\end{Lem}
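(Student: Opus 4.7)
The plan is to apply Lemma~\ref{lemproduct} directly and then exploit nonnegativity of Laplacian eigenvalues. Write the Laplacian spectrum of $G$ as $\mu_1 \geq \cdots \geq \mu_n = 0$ and that of $H$ as $\mu'_1 \geq \cdots \geq \mu'_m = 0$. By Lemma~\ref{lemproduct}, the Laplacian eigenvalues of $G \times H$ are exactly the $nm$ sums $\mu_i + \mu'_j$, counted with multiplicity. Therefore
$$
m_{G \times H}[0,1) \;=\; \bigl|\{(i,j) : \mu_i + \mu'_j < 1\}\bigr|.
$$

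The key observation is that every Laplacian eigenvalue is nonnegative, so whenever $\mu_i + \mu'_j < 1$ we must have both $\mu_i < 1$ and $\mu'_j < 1$. Consequently the pairs counted above form a subset of
$$
\{(i,j) : \mu_i < 1\} \times \{(i,j) : \mu'_j < 1\},
$$
which has cardinality $m_G[0,1) \cdot m_H[0,1)$. Taking cardinalities yields the stated inequality.

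There is no real obstacle in this argument; everything reduces to the additive description of the spectrum of the Cartesian product combined with the elementary fact that a sum of two nonnegative reals can be less than $1$ only if each summand is less than $1$. The only care needed is to track multiplicities correctly, which is automatic because Lemma~\ref{lemproduct} produces the full multiset of $nm$ eigenvalues of $G \times H$ without collisions beyond those inherent in the sums.
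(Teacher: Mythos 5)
Your proof is correct and follows essentially the same route as the paper's: invoke Lemma~\ref{lemproduct} to write each eigenvalue of $G \times H$ as $\mu_i + \mu'_j$, use nonnegativity to conclude both summands must be less than $1$, and count the pairs. No further comment is needed.
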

\begin{proof}
By Lemma~\ref{lemproduct},
Laplacian eigenvalues of $G \times H$
are of the form
$\mu_i + \mu^{\prime}_j$,
where $\mu_i$ and $\mu^{\prime}_j$ are eigenvalues of $G$ and $H$ respectively.
A necessary condition for
$\mu_i + \mu^{\prime}_j < 1$
is that $\mu_i < 1$ and $\mu^{\prime}_j < 1$.
There are at most $m_G [0,1) \cdot m_H [0,1)$ such pairs.
\end{proof}

\begin{Lem}
\label{leminequalA}
For any graph $G$ and any $k \geq 1$, $m_{G^k} [0,1) \leq m_G [0,1)^k $.
\end{Lem}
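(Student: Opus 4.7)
The plan is to proceed by a straightforward induction on $k$, using Lemma~\ref{leminequal} as the engine and the associativity of the Cartesian product (which was already invoked in the proof of Lemma~\ref{lemVizingproduct}) to decompose $G^{k+1}$ as $G \times G^{k}$.

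First I would handle the base case $k = 1$, which is trivial since both sides equal $m_G[0,1)$. For the inductive step, assuming $m_{G^{k}}[0,1) \leq m_G[0,1)^{k}$, I would write $G^{k+1} = G \times G^{k}$ and apply Lemma~\ref{leminequal} to the pair $(G, G^{k})$ to obtain
\[
m_{G^{k+1}}[0,1) \;=\; m_{G \times G^{k}}[0,1) \;\leq\; m_G[0,1) \cdot m_{G^{k}}[0,1).
\]
Substituting the inductive hypothesis into the right-hand side yields $m_G[0,1) \cdot m_G[0,1)^{k} = m_G[0,1)^{k+1}$, completing the induction.

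There is no real obstacle here: Lemma~\ref{leminequal} is precisely the $k=2$ case of the desired inequality, and the argument only requires iterating it. The one thing worth being slightly careful about is that Lemma~\ref{leminequal} is stated for arbitrary pairs of graphs, so it applies with $H = G^{k}$ without any extra hypothesis; the associativity of $\times$ ensures that the Laplacian spectrum of $G \times G^{k}$ (used via Lemma~\ref{lemproduct}) is indeed the Laplacian spectrum of $G^{k+1}$, so the chain of inequalities is consistent with the notation $G^{k+1}$.
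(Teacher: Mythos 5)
Your proof is correct and is essentially identical to the paper's: both proceed by induction on $k$, writing $G^{k+1} = G \times G^{k}$ and applying Lemma~\ref{leminequal} together with the inductive hypothesis. No issues.
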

\begin{proof}
The case $k=1$ is trivial, and $k=2$ is handled by
Lemma~\ref{leminequal}.
Assume $m_{G^k} [0,1) \leq m_G [0,1)^k $.
Then using Lemma~\ref{leminequal} and the induction assumption,
we have:
$$
m_{G^{k+1}} [0,1) = m_{G \times G^{k}} [0,1) \leq m_G [0,1) \cdot m_{G^k}[0,1) \leq m_G [0,1) \cdot m_G [0,1)^k .
$$
The right side is $m_G[0,1)^{k+1}$ completing the induction.
\end{proof}

Let $T$ be the tree of order $65$ in Figure~\ref{treefig}
for which
\begin{equation}
\label{eq:tree-facts}
m_T[0,1) = 24  \mbox{ and } \dom{T} = 25 .
\end{equation}
We claim that for all $k \geq 1$
\begin{equation}
\label{longinequal}
m_{T^k} [0,1) \leq m_T [0,1)^k \leq \dom{T}^k \leq \dom{T^k}
\end{equation}
The first inequality follows by Lemma~\ref{leminequalA},
and the second inequality follows by Theorem~\ref{lemgamma}.
The third inequality follows by
Lemma~\ref{lemtreesVizings} and Lemma~\ref{lemVizingproduct}.

\begin{Thr}
\label{thr:lowerratio}
There exists a sequence of graphs $G_k$
with $\frac{ \dom{G_k} }{ m_{G_k} [0,1) }  \not\in O(\log n)$.
\end{Thr}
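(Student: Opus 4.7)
The plan is to take $G_k = T^k$, the $k$-fold Cartesian product of the specific tree $T$ of order $65$ from Figure~\ref{treefig}, and show directly that the ratio grows exponentially in $k$ while $\log n$ grows only linearly in $k$.

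First I would invoke the chain (\ref{longinequal}) established just before the theorem: combining the eigenvalue bound of Lemma~\ref{leminequalA} with Theorem~\ref{lemgamma} applied to $T$ and with Vizing's conjecture for trees (Lemma~\ref{lemtreesVizings} and Lemma~\ref{lemVizingproduct}), we obtain
\begin{equation*}
m_{T^k}[0,1) \;\leq\; m_T[0,1)^k \;\leq\; \gamma(T)^k \;\leq\; \gamma(T^k).
\end{equation*}
In particular, using the numerical values in (\ref{eq:tree-facts}),
\begin{equation*}
\frac{\gamma(T^k)}{m_{T^k}[0,1)} \;\geq\; \frac{\gamma(T)^k}{m_T[0,1)^k} \;=\; \left(\frac{25}{24}\right)^k.
\end{equation*}

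Next I would compare this to $\log n_k$, where $n_k$ is the order of $G_k = T^k$. Since $|V(T)| = 65$, the product structure gives $n_k = 65^k$, so $\log n_k = k \log 65$ is linear in $k$. The lower bound $(25/24)^k$ on the ratio is exponential in $k$, so
\begin{equation*}
\frac{(25/24)^k}{\log n_k} \;=\; \frac{(25/24)^k}{k \log 65} \;\longrightarrow\; \infty
\end{equation*}
as $k \to \infty$. This is precisely the statement that $\gamma(G_k)/m_{G_k}[0,1) \not\in O(\log n)$.

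There is no real obstacle here since all the work has been done in the preceding lemmas; the only thing to verify carefully is that the chosen tree $T$ genuinely satisfies $m_T[0,1) < \gamma(T)$ (which gives a ratio strictly greater than $1$ whose $k$-th power then outpaces any logarithm), and that the product $T^k$ has order $65^k$ so that $\log n_k$ is indeed linear in $k$. Both facts are immediate.
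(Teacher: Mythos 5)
Your proposal is correct and follows essentially the same route as the paper: take $G_k = T^k$, apply the chain $m_{T^k}[0,1) \leq m_T[0,1)^k \leq \gamma(T)^k \leq \gamma(T^k)$ to get the lower bound $(25/24)^k$ on the ratio, and observe that this exponential growth in $k$ outpaces $\log n = k\log 65$. The paper merely packages the final step differently, rewriting $(25/24)^k$ as $n^{\log_{65}(25/24)} \approx n^{0.0098}$, a polynomial in $n$; your limit argument is equivalent.
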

\begin{proof}
We let $G_k = T^k$.
Using $n = 65^k$,  (\ref{eq:tree-facts}) and (\ref{longinequal}) we have
$$
\frac{ \dom{T^k} }{ m_{T^k} [0,1) } \geq
\frac{ \dom{T}^k }{ m_T [0,1)^k } =
\left ( \frac{25}{24} \right )^k =
\left ( \frac{25}{24} \right )^ {\log_{65} n} =
n^ {\log_{65} \frac{25}{24}} = n^{.009779} .
$$
\end{proof}

\paragraph{\bf{Ratios for certain classes}}
Consider the two approximation ratios:
\begin{equation}
\frac{\dom{G} }{ m_G[0,1) } \label{ratiob} \\
\end{equation}

\begin{equation}
\frac{ m_G[2,n] }{\dom{G}} \label{ratioa}
\end{equation}

Both ratios can get arbitrarily large.
By Theorem~\ref{thr:lowerratio}
the first of these ratios
is not bounded by $\log (n)$.
The second ratio also gets arbitrarily large.
When $G = K_n$ is the complete graph,
we see that ratio (\ref{ratioa}) is $n-1$.

Consider (\ref{ratioa}) for paths $P_n$.
It is well-known that $\dom{P_n} = \theratio{n}{3}$.
By Thr. 4.1 in \cite{Braga2013}
we also know $m_{P_n}[2,n] \leq \floorratio{n}{2}$,
and so (\ref{ratioa}) is at most $\frac{3}{2}$.
Using ideas from Section~\ref{sec:upperbound},
we show that for {\em all trees}  ratio (\ref{ratioa}) is less than two.

\begin{Lem}
\label{lem:addedge}
Let $G$ be a graph on $n$ vertices and $m < {n \choose 2}$ edges,
and let
$G^\prime$
be the graph obtained by adding an edge.
Then for any $a \geq 0$,
$$
m_G[a,n] \leq m_{G^\prime}[a,n] \leq m_G[a,n] + 1 .
$$
\end{Lem}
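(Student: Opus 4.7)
The left inequality is immediate from Lemma~\ref{lemsubgraph}(3): $G$ is a spanning subgraph of $G'$, with the same vertex set and one fewer edge, so $m_G[a,n] \leq m_{G'}[a,n]$ for every $a$.

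For the right inequality, the plan is to realize the added edge as a rank-one positive semi-definite perturbation of $L_G$ and apply classical interlacing. If the new edge is $\{u,v\}$, let $H$ be the graph on $V(G)$ whose only edge is $\{u,v\}$; then $L_H = (e_u-e_v)(e_u-e_v)^T$ is positive semi-definite and of rank one, and Lemma~\ref{lemlaplaciansum} gives $L_{G'} = L_G + L_H$. The rank-one Weyl interlacing inequalities, with eigenvalues numbered $\lambda_1 \geq \cdots \geq \lambda_n$ and the convention $\lambda_0 = +\infty$, yield
$$\lambda_i(L_G) \;\leq\; \lambda_i(L_{G'}) \;\leq\; \lambda_{i-1}(L_G) \qquad (1 \leq i \leq n).$$
The left half is exactly Lemma~\ref{lemcourantweyl}; the right half is the additional ingredient needed, a standard consequence of the Courant--Fischer characterization which I would simply cite rather than reprove.

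Given the interlacing, set $k = m_G[a,n]$, so that $\lambda_{k+1}(L_G) < a$. Applying the upper half at index $k+2$ gives $\lambda_{k+2}(L_{G'}) \leq \lambda_{k+1}(L_G) < a$, so at most $k+1$ Laplacian eigenvalues of $G'$ are $\geq a$. Since every Laplacian eigenvalue of a graph on $n$ vertices lies in $[0,n]$, this yields $m_{G'}[a,n] \leq k+1 = m_G[a,n]+1$. The only non-routine step is invoking the upper half of the rank-one interlacing; once that is in place the rest is a one-line counting argument, so I anticipate no real obstacle. The hypothesis $m < \binom{n}{2}$ is used only to ensure that there is an edge available to add.
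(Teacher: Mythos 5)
Your proof is correct and takes essentially the same route as the paper: both rest on the interlacing $\lambda_i(L_G) \le \lambda_i(L_{G'}) \le \lambda_{i-1}(L_G)$ under edge addition (the paper cites it as the Laplacian interlacing theorem, you derive it as rank-one Weyl interlacing for the perturbation $(e_u-e_v)(e_u-e_v)^T$), followed by the same counting argument around the index $k = m_G[a,n]$.
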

\begin{proof}
Let
$0 = \mu_n \leq \ldots \leq \mu_2 \leq \mu_1$
and
$0 = \mu^\prime_n \leq \ldots \leq \mu^\prime_2 \leq \mu^\prime_1$
be the respective Laplacian spectra of $G$ and $G^\prime$.
By the well-know interlacing theorem \cite[Thr. 2.4]{MR2494114}
for Laplacian eigenvalues we know
$$
0 = \mu_n = \mu^\prime_n \leq \ldots \leq \mu_k \leq \mu^\prime_k \leq \ldots \leq
\mu_2 \leq \mu^\prime_2 \leq \mu_1 \leq \mu^\prime_1
$$
If $a = 0$, then $m_G[a,n] = m_{G^\prime}[a,n] = n$.
If $\mu_1 < a$ then $m_G[a,n] =  m_{G^\prime}[a,n] = 0$.
We may assume that $0 < a \leq \mu_1$.
Choose $k$ to be the largest index for which $a \leq \mu_k$.
Then $\mu_{k+1} < a \leq \mu_k$.
There is a single eigenvalue of $G^\prime$,
namely $\mu^\prime_{k+1}$ in $[ \mu_{k+1}, \mu_k ] $.
If $\mu^\prime_{k+1} \leq a$,
then $m_{G^\prime}[a,n] = m_G[a,n] + 1$.
Otherwise, $m_{G^\prime}[a,n] = m_G[a,n]$.
\end{proof}

\begin{Thr}
\label{thr:treeratiob}
 If $T$ is a tree, then $1 \leq \frac{ m_T[2,n] }{ \dom{T} } < 2$.
\end{Thr}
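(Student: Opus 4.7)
The lower bound $1 \leq m_T[2,n]/\dom{T}$ is immediate from Theorem~\ref{gammatwo} applied to $T$ (which is isolate-free for $n \geq 2$; the case $n = 1$ is excluded because $\dom{T} = 1$ while $m_T[2,n] = 0$). So the substance is the strict upper bound $m_T[2,n] < 2 \dom{T}$.

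The plan is to combine the star-forest construction used to prove Theorem~\ref{gammatwo} with the edge-addition bound of Lemma~\ref{lem:addedge}. By Lemma~\ref{lemspanningforest}, $T$ has a star forest $F = (S_{n_1}, \ldots, S_{n_\gamma})$ that spans its vertex set, where $\gamma = \dom{T}$. As observed in Section~\ref{sec:upperbound}, each star $S_{n_i}$ contributes exactly one Laplacian eigenvalue $\geq 2$ (namely $n_i$ when $n_i \geq 3$, or $2$ when $n_i = 2$), so since the $S_{n_i}$ are vertex-disjoint,
\[
m_F[2,n] = \dom{T}.
\]

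Next I would count the edges missing from $F$. Since $F$ has $\dom{T}$ components and $n$ vertices, it has $n - \dom{T}$ edges, whereas the tree $T$ has $n - 1$ edges. Hence $T$ is obtained from $F$ by successively adding exactly $\dom{T} - 1$ edges of $T \setminus F$. By Lemma~\ref{lem:addedge}, each such edge addition increases $m_{(\cdot)}[2,n]$ by at most $1$, so
\[
m_T[2,n] \;\leq\; m_F[2,n] + (\dom{T} - 1) \;=\; \dom{T} + \dom{T} - 1 \;=\; 2\dom{T} - 1 \;<\; 2\dom{T},
\]
which is the desired strict inequality.

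There is no real obstacle here beyond assembling the ingredients already established in Section~\ref{sec:upperbound}: the only subtlety is bookkeeping the edge count $(n-1) - (n-\dom{T}) = \dom{T} - 1$, together with noting that Lemma~\ref{lem:addedge} applies regardless of whether the intermediate graphs remain forests, so we may simply add the $\dom{T}-1$ missing tree edges in any order. The argument fails for general isolate-free graphs because a graph may have arbitrarily more than $n-1$ edges, which is precisely why the tighter constant $2$ is special to trees; for $c$-cyclic graphs one would instead pick up $c$ extra edges, giving the $(c+1)\dom{G}$ bound of Theorem~\ref{thr:ccylicratiob}.
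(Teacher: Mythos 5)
Your proof is correct and follows essentially the same route as the paper's: the star forest $F$ from Lemma~\ref{lemspanningforest} with $m_F[2,n] = \dom{T}$, followed by adding the $\dom{T}-1$ missing tree edges and invoking Lemma~\ref{lem:addedge} to get $m_T[2,n] \leq 2\dom{T} - 1$. Your explicit edge-count bookkeeping $(n-1)-(n-\dom{T}) = \dom{T}-1$ is a welcome detail the paper leaves implicit.
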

\begin{proof}
Let $F = ( S_{n_1}, \ldots , S_{n_\gamma})$ be the star forest guaranteed by
Lemma~\ref{lemspanningforest}.
Then $m_F [2,n]$ is exactly $\dom{T}$.
Starting with $F$, we can construct
$T$ by {\em adding} $\dom{T} - 1$ edges.
By Lemma~\ref{lem:addedge} the addition of each edge
can increase $m_T[2,n]$ by at most one.
Therefore
$$
m_F[2,n] \leq m_T[2,n] \leq m_F[2,n] + \dom{T} - 1 .
$$
But the right side is $2 \dom{T} - 1$
and the theorem follows.
\end{proof}

A connected graph having $n - 1 + c$ edges is called {\em c-cyclic}.
We can generalize Theorem~\ref{thr:treeratiob} as follows.
\begin{Thr}
\label{thr:ccylicratiob}
If $G$ is $c$-cyclic, $c \geq 1$, then $1 \leq \frac{ m_G[2,n] }{ \dom{G} } \leq c+ 1$.
\end{Thr}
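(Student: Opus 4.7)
The plan is to mimic the proof of Theorem~\ref{thr:treeratiob} almost verbatim, with the extra bookkeeping forced by the $c$ additional edges. Since $G$ is connected and $c$-cyclic with $c \geq 1$, in particular $G$ is isolate-free, so Theorem~\ref{gammatwo} yields the lower bound $\dom{G} \leq m_G[2,n]$, i.e.\ $1 \leq m_G[2,n]/\dom{G}$.

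For the upper bound, I would begin by invoking Lemma~\ref{lemspanningforest} to obtain a spanning star forest $F = (S_{n_1}, \ldots, S_{n_\gamma})$ of $G$ where $\gamma = \dom{G}$. As in the proof of Theorem~\ref{thr:treeratiob}, each star contributes exactly one eigenvalue in $[2,n]$, so $m_F[2,n] = \gamma$. Now I would count edges: $F$ has $\sum_{i=1}^{\gamma}(n_i - 1) = n - \gamma$ edges, whereas $G$ has $n - 1 + c$ edges, so $G$ can be built from $F$ by successively adding exactly $(n-1+c) - (n-\gamma) = \gamma + c - 1$ edges.

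Next I would apply Lemma~\ref{lem:addedge} once per added edge: each addition increases $m[2,n]$ by at most one, so
\[
m_G[2,n] \;\leq\; m_F[2,n] + (\gamma + c - 1) \;=\; 2\gamma + c - 1.
\]
Finally I would verify the arithmetic inequality $2\gamma + c - 1 \leq (c+1)\gamma$, which rearranges to $(c-1)(\gamma - 1) \geq 0$; this holds since $c \geq 1$ and $\gamma \geq 1$ (any $c$-cyclic graph with $c \geq 1$ contains a cycle and therefore has $\dom{G} \geq 1$). Dividing through by $\dom{G}$ gives the claimed upper bound.

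There is no real obstacle here: the argument is a direct extension of the tree case, and the main care is just the edge count and the final algebraic check that $2\gamma + c - 1 \leq (c+1)\gamma$. The one subtlety worth noting in the write-up is that Lemma~\ref{lem:addedge} needs $m < \binom{n}{2}$ at each stage, which is automatic throughout the construction since $G$ itself has at most $\binom{n}{2}$ edges and every intermediate graph is a proper subgraph of $G$ until the last step.
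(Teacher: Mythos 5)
Your proof is correct and follows essentially the same route as the paper: build the spanning star forest from Lemma~\ref{lemspanningforest}, add the remaining $\dom{G}-1+c$ edges one at a time using Lemma~\ref{lem:addedge}, and finish with the same arithmetic (the paper writes the final step as $2 + \frac{c-1}{\dom{G}} \leq c+1$, which is your $(c-1)(\gamma-1) \geq 0$ in disguise). Your direct edge count replaces the paper's intermediate pass through a spanning tree, and you handle the lower bound and the $m < \binom{n}{2}$ hypothesis more explicitly, but these are presentational differences only.
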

\begin{proof}
Let $F = ( S_{n_1}, \ldots , S_{n_\gamma})$ be the star forest in $G$
from Lemma~\ref{lemspanningforest}.
Then we may select $\dom{G} - 1$ additional edges to form a
spanning tree $T$.
Since $T$ has $n-1$ edges, there must be $c$ remaining edges.
Therefore $G$ can be constructed from $F$ by adding $\dom{G} - 1 + c$ edges.
By Lemma~\ref{lem:addedge}
$$
 m_G[2,n] \leq m_F[2,n] + \dom{G} - 1  + c = 2 \dom{G} + c - 1,
$$
or
$$
\frac{ m_G[2,n] }{ \dom{G} } \leq 2 + \frac{c-1}{\dom{G}}  \leq 2 + c - 1 ,
$$
the last inequality holding because $c \geq 1$ and $\dom{G} \geq 1$.
\end{proof}

Let us now consider ratio (\ref{ratiob}) for trees.
For the tree in Figure~\ref{treefig}, the ratio (\ref{ratiob}) is $\frac{25}{24}$.
It is possible to generalize this example.  We construct the tree $T_k$
on $65k + 1$ vertices by taking $k$ copies of this tree, and adjoining the root to each copy.
Using the algorithm in \cite{Braga2013}, it is straightforward to determine
that $ m_{T_k} [0,1) = 24k$.
Using the domination algorithm in \cite{CGH1975}
it can be shown that $\dom{T_k} = 25k$.
Thus, the difference between $\dom{T_k} - m_{T_k} [0,1)$ grows arbitrarily large.
However, the
ratio (\ref{ratiob}) remains at $\frac{25}{24}$.
In all known examples of trees
ratio (\ref{ratiob})
is either $1$ or $\frac{25}{24}$,
and it is tempting to conjecture that
the ratio is bounded by a constant for trees.

\section{Concluding remarks}
\label{sec:concluding}
Many of the results of this paper also apply to the {\em signless Laplacian spectrum}.
For example, if we let $m_{G}^+ I$ denote the number of
signless Laplacian eigenvalues of $G$ in $I$, then
Theorem~\ref{lemgamma} and Theorem~\ref{gammatwo}
are also true if we replace $m_G$ with $m_{G}^+$.

We conclude by suggesting two problems for further study.
First, characterize those graphs $G$ for which $m_G[0,1) = \dom{G}$.
Second, determine if $\frac{\dom{T}} { m_T[0,1) }$ bounded by a constant for trees $T$.

\bibliographystyle{amsplain}
\bibliography{note}          

\end{document}